\DeclareMathOperator{\ord}{ord}
\DeclareMathOperator{\gl}{GL}
\DeclareMathOperator{\gsp}{Gsp}
\DeclareMathOperator{\br}{Br}
\DeclareMathOperator{\ram}{Ram}
\DeclareMathOperator{\jac}{Jac}
\DeclareMathOperator{\irr}{Irr}
\DeclareMathOperator{\sym}{Sym}
\DeclareMathOperator{\nm}{Nm}
\DeclareMathOperator{\im}{Im}
\DeclareMathOperator{\pic}{Pic}
\DeclareMathOperator{\deck}{Deck}
\theoremstyle{plain}
\newtheorem{thm}{Theorem}[section]
\newtheorem{theorem}[thm]{Theorem}
\newtheorem{lemma}[thm]{Lemma}
\theoremstyle{definition}
\newtheorem{remark}[thm]{Remark}
\newtheorem{definition}[thm]{Definition}
\numberwithin{equation}{thm}
\newcommand{\sC}{{\mathcal C}}
\newcommand{\C}{{\mathbb C}}
\newcommand{\D}{{\mathbb D}}
\renewcommand{\H}{{\mathbb H}}
\renewcommand{\L}{{\mathbb L}}
\renewcommand{\P}{{\mathbb P}}
\newcommand{\Q}{{\mathbb Q}}
\newcommand{\R}{{\mathbb R}}
\newcommand{\Z}{{\mathbb Z}}
\newcommand{\Hom}{{\rm Hom}}
\newcommand{\Sp}{{\rm Sp}}
\begin{document}
	\title{Shimura subvarieties in the Prym locus of ramified Galois coverings}
	\author[G.P. Grosselli]{Gian Paolo Grosselli} 
	\address{Dipartimento di Matematica, Universit\`a di Pavia, via Ferrata 5, I-27100, Pavia, Italy }
	\email{g.grosselli@campus.unimib.it}
	\author[A. Mohajer]{Abolfazl Mohajer}
	\address{Universit\"{a}t Mainz, Fachbereich 08, Institut f\"ur Mathematik, 55099 Mainz, Germany}
	\email{mohajer@uni-mainz.de} 
	\thanks{The first author is member of GNSAGA of INdAM.
	The first author was partially supported by national MIUR funds,
	PRIN  2017 Moduli and Lie theory and by MIUR: Dipartimenti di Eccellenza Program (2018-2022) - Dept. of Math. Univ. of Pavia.}
	\subjclass[2010]{14H30, 14H40}
	\keywords{Prym variety, Prym map, Galois covering}
	\maketitle
	
	\begin{abstract}
		We study Shimura (special) subvarieties in the moduli space $A_{p,D}$ of complex abelian varieties of dimension $p$ and polarization type $D$. These subvarieties arise from families of covers compatible with a fixed group action on the base curve such that the quotient of the base curve by the group is isomorphic to $\P^1$. We give a criterion for the image of these families under the Prym map to be a special subvariety and, using computer algebra, obtain 210 Shimura subvarieties contained in the Prym locus. 
	\end{abstract}
	
	\section{Introduction}
	In \cite{CFGP}, E. Colomobo, P. Frediani, A. Ghigi and M. Penegini have extensively studied  Shimura curves of PEL type in $A_g$,  contained generically in the Prym locus (see also the papers \cite{CF} and \cite{CF2} by E. Colomobo and P. Frediani). The general set-up is as follows: Let $R_g$ be the scheme of isomorphism classes $[C,\eta]$, for $C$ a smooth projective curve of genus $g$ and $\eta\in \pic^0(C)$ a 2-torsion element, i.e., $\eta\neq \mathcal{O}_C$ but $\eta^2=\mathcal{O}_C$. The line bundle $\eta$ determines an (unramified) \'etale double cover $h:\tilde{C}\to C$ and there is an induced norm map $\nm:\pic^0(\tilde{C})\to \pic^0(C)$. The Prym variety associated to $[C,\eta]$ is defined to be the connected component of $\ker\nm$ containing the identity and is denoted by $P(C,\eta)$ or $P(\tilde{C},C)$. In a similar way, one can construct a Prym variety for ramified covers. Consider the scheme $R_{g,2}$ parametrizing pairs $[C,B,\eta]$ up to isomorphism, where $C$ is a smooth projective curve of genus $g$, $\eta$ a line bundle on $C$ of degree $1$, and $B$ a divisor in the linear series $|\eta^2|$ corresponding to a double covering $\pi:\tilde{C}\to C$ ramified over $B$. The assignment $[C,\eta]\mapsto P(\tilde{C}, C)$ (resp. $[C,B,\eta]\mapsto P(\tilde{C}, C)$) defines a map $\mathscr{P}:R_{g}\to A_g$ (resp. $\mathscr{P}:R_{g,2}\to A_g$). This goes under the name of the \emph{Prym map}. Both in the unramified Prym locus corresponding to the unramified \'etale double covers and the ramified Prym locus, corresponding to the family of double covers ramified at two points, \cite{CFGP} gives examples of one-parameter families $(C_t,\eta_t)$ ($t\in T=\P^1\setminus\{0,1,\infty\}$) for which the image $\mathscr{P}(T)$ parametrizes Shimura curves in $A_g$ provided that the curves $C_t$ are themselves Galois coverings of $\mathbb{P}^{1}$ (here we denote the image of  $T$ in $ R_g$ via $t\mapsto [C_t,\eta_t]$ again by $T$ see section 2 or \cite{CFGP}). More precisely, the authors consider a family of Galois covers $\tilde{C_t}\to \mathbb{P}^{1}$ with Galois group $\tilde{G}$ and a central involution $\sigma$ such that the double covering $\tilde{C_t}\to \tilde{C_t}/\langle \sigma\rangle$ is either \'etale or ramified over exactly two distinct points. By the theory of coverings, the Galois covering $\tilde{C_t}\to \mathbb{P}^{1}$ is determined by an epimorphism $\tilde{\theta}:\Gamma_r\to\tilde{G}$ with branch points $t_1,\dots,t_r\in \mathbb{P}^{1}$. Here $\Gamma_r$ is isomorphic to the fundamental group of $\mathbb{P}^{1}\setminus\{t_1,\dots,t_r\}$. Varying the branch points, we get a family $R(\tilde{G},\tilde{\theta},\sigma)\subset R_g$ (the image of $T$ in $R_g$ mentioned above). The paper \cite{CFGP} then gives examples of families $R(\tilde{G},\tilde{\theta},\sigma)$ for which the Zariski closure $\overline{\mathscr{P}(R(\tilde{G},\tilde{\theta},\sigma))}$ of the image under the Prym map is a Shimura curve in $A_g$.\\
	
	In the paper \cite{FG}, the first author together with P. Frediani investigated the occurrence of Shimura curves arising from families of Prym varieties of double covers ramified over more than two points. Note that in this case the Prym variety is not principally polarized in general. The paper \cite{FG} is thus a generalization of the paper \cite{CFGP} which only considers Pryms of unramified double covers or double covers ramified over at most two points. The subsequent work \cite{FGM} of the present authors together with P. Frediani investigated the same problem for higher dimensional Shimura varieties contained in these loci.\par  In this paper we generalize the aforementioned papers in two directions: We consider families of Pryms of arbitrary Galois covers of curves (not necessarily double covers) while we also get higher-dimensional as well as 1-dimensional families.  More precisely to a finite Galois covering $f:\widetilde{C}\to C$ we associate a Prym variety $P(\widetilde{C}/C)$. The Prym variety is of dimension $p=\widetilde{g}-g$, where $\widetilde{g},g$ are the genera of $\widetilde{C}, C$ respectively. Furthermore, it is an abelian variety of certain polarization type $D$, see section 3. So it determines a point in the moduli space $A_{p,D}$ of complex abelian varieties of dimension $p$ and polarization type $D$. The stack parametrizing families of the above covers of curves will be denoted by $R(H,g,r)$, where $g=g(C_t)$ is the genus of the base curve and $r$ is the number of branch points of the covering. \par We consider the following families of curves: We fix a finite group $\widetilde{G}$ and a normal subgroup $H\subseteq\widetilde{G}$. The families that we consider here are families $\widetilde{C}_t\rightarrow \mathbb{P}^{1}$ of $\widetilde{G}$-Galois covers of $\P^1$. By varying the branch points  we obtain a family $f:\sC\to T_s$ of covers of $\mathbb{P}^{1}$. This family gives rise to the family $\widetilde{C}_t\to C_t=\widetilde{C}_t/H$ in $R(H,g,r)$ and the corresponding family of Pryms $P(\widetilde{C}_t/C_t)$. The image of $T_s$ in $R(H,g,r)$, which we again denote by $T_s$, is of dimension $s-3$. The Prym map behaves well in families and we are interested in the Zariski closure of the image $Z=\overline{\mathscr{P}(T_s)}$ under the Prym map which is a subvariety of $A_{p,D}$. For computational reasons, the case where $\widetilde{G}$ is abelian is of great importance for us. Therefore, in Section \ref{abeliancovers}, we explain an alternative construction of the abelian covers of $\mathbb{P}^{1}$ than given in \cite{CFGP}. 
	Note that the Prym variety and the Prym map of abelian and metabelian covers have been studied in \cite{M20}. The $H$-action and its eigenspaces on the cohomology and also the eigenspaces of the whole group $\widetilde{G}$ acting on these spaces are useful for our computations. \par In Section \ref{Shimura} we point out that the moduli space $A_{p,D}$ has the structure of a \emph{Shimura variety}. We find families for which the subvariety $Z$ is a special (or Shimura) subvariety of $A_{p,D}$. We introduce conditions (B), (B1) and (B2) under which the subvariety $Z$ is special. Using computer algebra we investigate these condition and find 210 examples satisfying them, see the table on page 21. We also work out in detail some important examples of the table. In addition to families of abelian covers, we find some families with $\widetilde{G}$ non-abelian. Furthermore, our approach yields also higher dimensional special families of Pryms in $A_{p,D}$. Note that in \cite{CF} the authors give upper bounds for the dimension of a germ of a totally geodesic submanifold, and hence of a special subvariety in the Prym locus.  
	\section{Prym map and the Prym variety} \label{Prym}
	To a given finite covering $f:\widetilde{C}\to C$ between non-singular projective algebraic curves (or Riemann surfaces) one can associate an abelian variety, the so-called \emph{Prym variety} : $f$ induces a \emph{norm map}
	\begin{align*}
	\nm_f:\pic^0(\widetilde{C})\to \pic^0(C)\\
	\sum a_ip_i\mapsto \sum a_if(p_i)
	\end{align*}
	The Prym variety associated to $f$ is then defined as $P(f)=P(\widetilde{C}/C)=(\ker \nm_f)^0$, i.e., the connected component of the kernel of $\nm_f$ containing the identity. Identifying $\pic^0$ with the Jacobian, one sees that the canonical (principal) polarization of $\jac(\widetilde{C})$ restricts to a polarization on $P(f)$.\par 
	Classically, $f$ is a double covering which is \'etale or branched at exactly two points. In these cases, $P(f)$ is known to be principally polarized. In fact, these are the only cases in which the polarization on $P(f)$ is principal. However, the type $D$ of the polarization on $P(f)$ depends on the topological structure of the covering map $f$, see \cite{BL}. \par 
	Let $H$ be a finite group with $n=|H|$. Suppose $C$ is a compact Riemann surface of genus $g$. Let $t:=\{t_1,\dots, t_r\}$ be an $s$-tuple of distinct points in $C$. Set $U_t:=C\setminus \{t_1,\dots, t_r\}$. The fundamental group $\pi_1(U_t, t_0)$ has a presentation $\langle \alpha_1,\beta_1,\dots,\alpha_g,\beta_g, \gamma_1,\dots,\gamma_r\mid \prod_1^s\gamma_i \prod_1^g[\alpha_j,\beta_j]=1\rangle$. Here $\alpha_1,\beta_1,\dots,\alpha_g,\beta_g$ are simple loops in $U_t$ which only intersect in $t_0$, and their homology classes in $H_1(C,\Z)$ form a symplectic basis.\par 
	If $f:\widetilde{C}\to C$ is a  ramified $H$-Galois cover with branch locus $t$, set $V=f^{-1}(U_t)$. Then $f|_V:V\to U_t$ is an unramified Galois covering. Then there is an epimorphism $\theta:\pi_1(U_t, t_0)\to H$. Conversely, such an epimorphism determines a ramified Galois covering of $C$ with branch locus $t$.  The order $m_i$ of $\theta(\gamma_i)$ is called the \emph{local monodromy datum} of the branch point $t_i$. Let $m=(m_1,\dots, m_r)$. The collection $(m,H,\theta)$ is called a \emph{datum}. The Riemann-Hurwitz formula implies that the genus $\widetilde{g}$ of the curve $\widetilde{C}$ is equal to
	\begin{equation}
	2\widetilde{g}-2=|H|(2g-2+\sum_{i=1}^r (1-\frac{1}{m_i}))
	\end{equation}
	We introduce the stack $R(H,g,r)$: The Objects of $R(H,g,r)$ are couples $((C, x_1,\dots, x_r),f:\widetilde{C}\to C)$ such that
	\begin{enumerate}
		\item $(C, x_1,\dots, x_r)$ is a smooth projective $r$-pointed curve of genus $g$.
		\item $f:\widetilde{C}\to C$ is a finite cover, $H$ acts on  $\widetilde{C}$ and $f$ is $H$-invariant.
		\item the restriction $f^{gen}:f^{-1}(C\setminus\{x_1,\dots, x_r\})\to C\setminus\{x_1,\dots, x_r\}$ is an \'etale $H$-torsor.
	\end{enumerate}
	Note that $r=0$ is also possible which amounts to say that the covers $\widetilde{C}\to C$ are unramified. Moreover since our problem is insensitive to level structures, we may actually consider $R(H,g,r)$ as a coarse moduli space. As a result, we omit any assumptions on the automorphism group of the base curve $C$ whose non-triviality can be remedied either by considering the moduli stack or by imposing level structures.\par Let us denote the Jacobians of the curves $\widetilde{C}$ and $C$ respectively by $\widetilde{J}$ and $J$. Note that by definition, if $R$ is a Riemann surface,
	\[J(R)=\jac(R)=H^0(R,\omega_{R})^*/H_1(R,\mathbb{Z}).\]
	Since the finite group $H$ acts on $\widetilde{C}$ it also acts on the space of differential 1-forms $H^0(\widetilde{C},\omega_{\widetilde{C}})$ and $H_1(\widetilde{C},\mathbb{Z})$ and hence on the Jacobian $\widetilde{J}$. In particular, we denote by $\widetilde{J}^{H}$ the subgroup of fixed points of $\widetilde{J}$ under the action of $H$. The following theorem is proven in \cite{RR} (repectively, Theorem 2.5 and Proposition 3.1).
	\begin{theorem} \label{Subtorus-jacobian}
		\begin{enumerate}
			\item $f^*J=(\widetilde{J}^H)^0$.\item The map $f$ induces an isogeny $J\times P(\widetilde{C}/C)\sim \widetilde{J}$
		\end{enumerate}
	\end{theorem}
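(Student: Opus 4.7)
The plan is to derive both parts from the single identity $\nm_f\circ f^*=[\deg f]_{J}=[|H|]_{J}$, which expresses that $f_*f^*D=(\deg f)\cdot D$ for any divisor $D$ on $C$. Everything below is organized around exploiting this identity, together with the fact that $H^0(C,\omega_C)$ is identified with the $H$-invariants $H^0(\widetilde{C},\omega_{\widetilde{C}})^H$ via $f^*$ (the standard Galois-descent statement for regular $1$-forms on Riemann surfaces).

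For part (1), I would first observe that $f^*J\subseteq \widetilde{J}^H$: since $f\circ h=f$ for every $h\in H$, we have $h^*\circ f^*=f^*$ on $J$. Because $\nm_f\circ f^*=[|H|]_J$ is an isogeny of $J$, the map $f^*:J\to\widetilde{J}$ has finite kernel, so $f^*J$ is a connected abelian subvariety of dimension $g$. It then suffices to show $\dim(\widetilde{J}^H)^0=g$. Passing to tangent spaces at the origin, $T_0(\widetilde{J}^H)^0=\bigl(H^0(\widetilde{C},\omega_{\widetilde{C}})^*\bigr)^H$, whose dimension equals $\dim H^0(\widetilde{C},\omega_{\widetilde{C}})^H=\dim H^0(C,\omega_C)=g$. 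Since $f^*J$ is a connected subvariety of $(\widetilde{J}^H)^0$ of the same dimension, equality follows.

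For part (2), I would consider the addition map
\[
\mu\colon(\widetilde{J}^H)^0\times P(\widetilde{C}/C)\longrightarrow \widetilde{J},\qquad (x,y)\mapsto x+y.
\]
The intersection $(\widetilde{J}^H)^0\cap(\ker\nm_f)^0$ is contained in $f^*J\cap\ker\nm_f$, and by the identity $\nm_f\circ f^*=[|H|]_J$ any element there comes from the $|H|$-torsion of $J$; hence this intersection is finite and $\mu$ has finite kernel. For surjectivity, note that the image of $\nm_f$ contains $\nm_f(f^*J)=[|H|]J=J$, so $\nm_f$ is surjective and $\dim\ker\nm_f=\widetilde{g}-g=p$. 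Thus $\dim(\widetilde{J}^H)^0+\dim P(\widetilde{C}/C)=g+p=\widetilde{g}=\dim\widetilde{J}$, forcing $\mu$ to be an isogeny. Precomposing with the isogeny $f^*\times\mathrm{id}\colon J\times P(\widetilde{C}/C)\to(\widetilde{J}^H)^0\times P(\widetilde{C}/C)$ from part (1) yields the isogeny $J\times P(\widetilde{C}/C)\sim\widetilde{J}$ claimed in (2).

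The only genuinely content-bearing step is the Galois-descent identification $f^*H^0(C,\omega_C)=H^0(\widetilde{C},\omega_{\widetilde{C}})^H$ used in part (1); this is where the Galois hypothesis on $f$ is really needed, rather than just the covering property. Everything else is formal manipulation with the relation $\nm_f\circ f^*=[|H|]_J$ and dimension counting, which I expect to run through without surprise.
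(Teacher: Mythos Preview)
The paper does not supply its own proof of this theorem; it simply cites \cite{RR} (Theorem~2.5 and Proposition~3.1 there). Your argument is a correct, self-contained proof of both statements in the Galois setting, and the isogeny you produce in part~(2), namely $(c,\widetilde{c})\mapsto f^*c+\widetilde{c}$, is precisely the map the paper records in equation~(\ref{isogeny equation}) immediately after the theorem. The only point worth making explicit is that the identity $T_0\bigl((\widetilde{J}^H)^0\bigr)=(T_0\widetilde{J})^H$ you use in part~(1) relies on $H$ being finite (so that the fixed-point scheme is smooth and its tangent space is the invariant subspace); this is unproblematic here. Otherwise your proof stands on its own and fills in what the paper leaves to the reference.
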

	We note that the isogeny mentioned in Theorem \ref{Subtorus-jacobian} is given by
	\begin{gather}\label{isogeny equation}
	\phi:J\times P(\widetilde{C}/C)\to \widetilde{J}\nonumber\\
	\phi(c,\widetilde{c})=f^*c+\widetilde{c}
	\end{gather}
	For a Galois covering $f:\widetilde{C}\to C$ with $((C, x_1,\dots, x_r),f:\widetilde{C}\to C)\in R(H,g,r)$ and $\deg(f)=n$, one can compute the genus $\widetilde{g}:=g(\widetilde{C})$ by the Riemann-Hurwitz formula. Using the isogeny $f^*J\times P(\widetilde{C}/C)\sim \widetilde{J}$ we see that the dimension of the Prym variety $P(\widetilde{C}/C)=P(f)$ is equal to $p=\widetilde{g}-g$. The canonical principal polarization on $\widetilde{J}$ restricts to a polarization of type $D=(1,\dots,1,n,\dots,n)$ where $1$ occurs $g-1$ times and $n$ occurs $p-(g-1)$ times if $r=0$ and $1$ occurs $g$ times and $n$ occurs $p-g$ times otherwise.\par
	Note that, it follows from Theorem \ref{Subtorus-jacobian} that if $C\cong\P^1$, then the Prym variety $P(\widetilde{C}/C)$ is isogeneous to the Jacobian $\widetilde{J}$. We will use this point in the sequel to deduce that some families are special. \par
	Let $A_{p,D}$ denote the moduli space of complex abelian varieties of dimension $p$ and polarization type $D$. More precisely, $A_{p,D}=\H_p/\Gamma_D$ is the moduli space of polarized abelian varieties of type $D$ where \par  $\H_p:=\{M\in M_p(\C)\mid ^tM=M, \im M\geq 0\}$ is the \emph{Siegel upper half space  of genus $p$} and
	\[\Gamma_D=\{R\in\gl_{2p}(\Z)\mid R\begin{pmatrix}
	
	0  &D\\
	
	-D &0
	
	\end{pmatrix}  {^tR}=\begin{pmatrix}
	
	0  &D\\
	
	-D &0
	
	\end{pmatrix}\}\]
	is an arithmetic subgroup. The above constructions behave well also in the families of curves and hence we obtain a morphism
	\begin{equation}
	\mathscr{P}=\mathscr{P}(H,g,r):R(H,g,r)\to A_{p,D}.
	\end{equation}
	We call the map $\mathscr{P}$ the \emph{Prym map of type $(H,g,r)$}. Our objective in this paper is to study the image of this map. The Prym map is even in the classical case known to be non-injective which implies that one needs to study other closely related aspects, namely the generic injectivity. \par
	By the above mentioned $H$-action on $H^0(\widetilde{C},\omega_{\widetilde{C}})$ and
	$H_1(\widetilde{C},\mathbb{Z})$,  we set:
	\begin{align} \label{plus minus}
	H^0(\widetilde{C},\omega_{\widetilde{C}})_+=H^0(\widetilde{C},\omega_{\widetilde{C}})^{H}(\cong H^0(C,\omega_{C})) \text{ and }\\
	H^0(\widetilde{C},\omega_{\widetilde{C}})_-=H^0(\widetilde{C},\omega_{\widetilde{C}})/H^0(\widetilde{C},\omega_{\widetilde{C}})^+=
	\bigoplus\limits_{\chi\in\irr(H)\setminus\{1\}}H^0(\widetilde{C},\omega_{\widetilde{C}})^{\chi}
	\end{align}
	Notice that $H^0(\widetilde{C},\omega_{\widetilde{C}})=H^0(\widetilde{C},\omega_{\widetilde{C}})_+\oplus H^0(\widetilde{C},\omega_{\widetilde{C}})_-$. \par The following lemma is then an immediate consequence of Theorem \ref{Subtorus-jacobian} above.
	\begin{lemma} \label{prymvar}
		Let $f:\widetilde{C}\to C$ be a Galois covering, then
		\begin{equation} \label{Prymdef}
		P(\widetilde{C}/C)={H^0(\widetilde{C},\omega_{\widetilde{C}})_-}^*/H_1(\widetilde{C},\mathbb{Z})_-
		\end{equation}
	\end{lemma}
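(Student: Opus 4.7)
The plan is to combine Theorem \ref{Subtorus-jacobian} with the analytic uniformization of the Jacobian and with an $H$-equivariant decomposition of the cotangent space and of the rational homology of $\widetilde{C}$. Set $V=H^0(\widetilde{C},\omega_{\widetilde{C}})^*$ and $\Lambda=H_1(\widetilde{C},\mathbb{Z})$, so that $\widetilde{J}=V/\Lambda$. Since $H$ is finite, the averaging projector $e_H := \frac{1}{|H|}\sum_{h\in H} h$ induces an $H$-equivariant splitting $V=V_+\oplus V_-$ which is dual to the decomposition of $H^0(\widetilde{C},\omega_{\widetilde{C}})$ introduced above; similarly one obtains a rational decomposition $\Lambda_\mathbb{Q}=\Lambda_{+,\mathbb{Q}}\oplus\Lambda_{-,\mathbb{Q}}$.

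By Theorem \ref{Subtorus-jacobian}(1), the subvariety $f^*J=(\widetilde{J}^H)^0$ is the largest subtorus of $\widetilde{J}$ fixed pointwise by $H$. Its tangent space at the origin is therefore $V_+$, and hence $f^*J=V_+/\Lambda_+$ with $\Lambda_+:=\Lambda\cap V_+$. On the other hand, the norm map $\nm_f$ corresponds analytically to the dual of the pullback of $1$-forms $f^*:H^0(C,\omega_C)\hookrightarrow H^0(\widetilde{C},\omega_{\widetilde{C}})$, whose image is precisely $H^0(\widetilde{C},\omega_{\widetilde{C}})^H=H^0(\widetilde{C},\omega_{\widetilde{C}})_+$. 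Taking annihilators, the differential of $\nm_f$ at the origin has kernel $V_-$, so that $P(\widetilde{C}/C)=(\ker\nm_f)^0$ has tangent space $V_-$ and underlying lattice $\Lambda_-:=\Lambda\cap V_-=\ker(f_*\colon H_1(\widetilde{C},\mathbb{Z})\to H_1(C,\mathbb{Z}))$. This is exactly what is meant by $H_1(\widetilde{C},\mathbb{Z})_-$, and the stated formula follows.

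The main subtlety will be the bookkeeping of integral lattices: the rational splitting $\Lambda_\mathbb{Q}=\Lambda_{+,\mathbb{Q}}\oplus\Lambda_{-,\mathbb{Q}}$ only restricts to a finite-index sublattice $\Lambda_+\oplus\Lambda_-\subseteq\Lambda$ in general, which is precisely why Theorem \ref{Subtorus-jacobian}(2) is stated as an isogeny rather than an isomorphism. This finite-index discrepancy is nevertheless harmless for the lemma, since after passing to the connected component of $\ker\nm_f$ the sublattice $\Lambda_-\subseteq V_-$ already cuts out $P(\widetilde{C}/C)$ as a complex subtorus of $\widetilde{J}$; the Birkenhake--Lange theory of complementary abelian subvarieties packages this identification cleanly, and one needs only to check that the two a priori distinct lattices in $V_-$ (the one inherited from $\widetilde{J}$ and the kernel of $f_*$) coincide, which is immediate from the analytic description of $\nm_f$ just given.
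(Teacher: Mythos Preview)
Your argument is correct and is essentially an unpacking of what the paper asserts in one line: the paper simply states that the lemma is ``an immediate consequence of Theorem~\ref{Subtorus-jacobian}'' and gives no further details. Your proof makes explicit the analytic identification---computing the tangent space of $(\ker\nm_f)^0$ as the annihilator of $f^*H^0(C,\omega_C)=H^0(\widetilde{C},\omega_{\widetilde{C}})_+$ and the lattice as $\Lambda\cap V_-=\ker f_*$---which is exactly the content hiding behind the word ``immediate''. The only stylistic point is that your final paragraph reads as a plan (``The main subtlety will be\dots'') rather than a finished argument; you have in fact already dealt with the lattice issue in the preceding paragraph, so this coda could be trimmed or rephrased as a concluding remark.
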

	\section{Galois coverings} \label{abeliancovers}
	\subsection{Generalities}
	Let us summarize some general facts about Galois coverings of curves. Let $\widetilde{C},C$ be complex smooth projective algebraic curves (equivalently Riemann surfaces) and let $f\colon\widetilde{C}\to C$ be a Galois covering of degree $n$. By this we mean precisely that there exists a finite group $H$ with $|H|=n$ together with a faithful action of $H$ on $\widetilde{C}$ such that $f$ realizes $C$ as the quotient of $\widetilde{C}$ by $H$. Consider the ramification and branch divisors $R,B$ of $f$. Note
	that $R$ consists precisely of the points in $\widetilde{C}$ with non-trivial stabilizers under the action of $H$. The deck transformation group $\deck(\widetilde{C}/ C)$, i.e., the group of those automorphisms of $\widetilde{C}$ that are compatible with $f$ is isomorphic to the Galois group $H$ and acts transitively on each fiber $f^{-1}(x)$. If $y\in \widetilde{C}$ is a ramification point with ramification index $e$, then so are all points in the fiber $f^{-1}(f(y))$. Moreover, the stabilizers of these points in $\deck(\widetilde{C}/ C)\cong H$ are conjugate cyclic subgroups of $\deck(\widetilde{C}/ C)$, see \cite{Sz}, Proposition 3.2.10. In particular the stabilizer of a point in $\widetilde{C}$ is trivial, if and only if that point is \emph{not} a ramification point. The stabilizer $H_y$ of a point $y\in \widetilde{C}$ is also referred to as  the \emph{inertia subgroup} of $y$.
	\subsection{Galois covers of $\mathbb{P}^{1}$ and their Prym map}
	Let $\widetilde{G}$ be a finite group and $\widetilde{f}:\widetilde{C}\to \mathbb{P}^1$ a finite $\widetilde{G}$-Galois covering ramified over the branch points
	$\br(\widetilde{f})=\{t_1,\dots,t_s\}\subset\mathbb{P}^1$ as in introduction. Let $\Gamma_s:=\pi_1(\mathbb{P}^1\setminus\br(\widetilde{f}))=\langle\gamma_{1},\dots, \gamma_{s}|\gamma_{1}\cdots\gamma_{s}=1\rangle$, where $\gamma_{j}$ corresponds to a loop winding around $z_{j}$. Such a $\widetilde{G}$-Galois covering is determined by an
	epimorphism $\widetilde{\theta}_s:\Gamma_s\to\widetilde{G}$ (See \cite{Vo}, Theorem 5.14). The local monodromy around the branch point $t_{j}$ is given by $\widetilde{\theta}_s(\gamma_{j})$. The set of ramification points $\ram(\widetilde{f})$ consists
	precisely of the points in $\widetilde{C}$ with non-trivial stabilizers under the action of $\widetilde{G}$. As we assume that the cover $\widetilde{f}$ is Galois we have that
	$\br(\widetilde{f})=\widetilde{f}(\ram(\widetilde{f}))$ and $\widetilde{f}^{-1}(\br(\widetilde{f}))=\ram(\widetilde{f})$. Varying the branch points $\{t_1,\dots,t_s\}$ yields a family of $\widetilde{G}$-covers of $\P^1$. If $H$ is a normal subgroup of $\tilde{G}$, then to the quotient cover $f:\widetilde{C}\to C=\widetilde{C}/H$ one can associate a Prym variety as in the beginning of Section \ref{Prym}. The following definition is central in this paper.
	\begin{definition}\label{Prym datum general}
		A \emph{Prym datum} (of type $(H,g,r)$, compare \cite{CFGP}, Definition 3.1) is a triple $(\tilde{G}, \tilde{\theta}_s,H)$ where $\tilde{G}$ is a finite group, $\widetilde{\theta}_s:\Gamma_s\to\widetilde{G}$ is an epimorphism as above and $H$ is a normal subgroup of $\tilde{G}$, such that the quotient $f:\widetilde{C}\to C=\widetilde{C}/H$ is in $R(H,g,r)$.
	\end{definition}
	Let $\widetilde{G}$ be a finite group and let $\widetilde{C}\to \mathbb{P}^{1}$  be a $\widetilde{G}$-Galois covering of $\mathbb{P}^{1}$ with the Prym datum $(\tilde{G}, \tilde{\theta}_s,H)$. Set $V=H^0(\widetilde{C},\omega_{\widetilde{C}})$ and let $V=V_{+}\oplus V_{-}$ be the decomposition into $H$-invariant and $H$-anti-invariant parts as in \ref{abelian decomposition}. There is also the corresponding Hodge decomposition $H^1(\widetilde{C},\mathbb{C})_-=V_{-}\oplus \overline{V}_{-}$. Set $\Lambda=H_1(\widetilde{C},\mathbb{Z})_-$. The associated Prym variety is by definition the following abelian variety.
	\begin{equation}\label{Prymdef1}
	P(\widetilde{C}/C)=V^{*}_{-}/\Lambda,
	\end{equation}
	see \cite{BL} for more details.
	\subsection{Abelian covers and their invariants}\label{Abelian Galois covers}
	Let $f:\widetilde{C}\to C$ be a $H$-Galois cover with $H$ finite abelian branched above the points $x_1,\dots, x_r$. Since the group $H$ is abelian, the inertia group above a branch point $x_i$ is independent of the chosen ramification point and we denote it by $H_i$.  Then $f_*\mathcal{O}_{\widetilde{C}}=\bigoplus\limits_{\chi \in G^*}L^{-1}_{\chi}$, where each $L_{\chi}$ is an invertible sheaf on $C$ on which $G$ acts by character $\chi$. So in particular, the invariant summand $L_1$ is ismorphic to $\mathcal{O}_C$. The algebra structure on $f_*\mathcal{O}_{\widetilde{C}}$ is given by the ($\mathcal{O}_C$-linear) multiplication rule $m_{\chi,\chi^{\prime}}:L^{-1}_{\chi}\otimes L^{-1}_{\chi^{\prime}}\to L^{-1}_{\chi\chi^{\prime}}$ and compatible with the action of $G$. The choice of a primitive $n$-th root of unity $\xi$ amounts to giving a map $\{1,\dots, r\}\to H$, the image $h_i$ of $i$ under which is the generator of the inertia group $H_i$ that is sent to $\xi^{n/n_i}$ by $\chi_i$. The line bundles $L_{\chi}$ and divisors $x_i$ each labelled with an element $h_i$ as described above are called the building data of the cover. In fact given these data, one can construct an abelian cover which has these bundles as its building data, see \cite{P1}. We have the following eigenspace decompositions for the abelian cover.
	\begin{align}
	H^0(\widetilde{C},\omega_{\widetilde{C}})=H^0(C,f_*\omega_{\widetilde{C}})=H^0(C,\oplus(\omega_{C}\otimes L_{\chi^{-1}}))=\oplus_{\chi\in H^*} H^0(C,\omega_{C}\otimes L_{\chi^{-1}})
	\end{align}
	where the second equality is due to the equality $(f_*\omega_{\widetilde{C}})^{\chi}=\omega_{C}\otimes L_{\chi^{-1}}$ for abelian covers, see \cite{P1}. In view of the above equalities, one obtains
	\begin{align}  
	H^0(\widetilde{C},\omega_{\widetilde{C}})_+=H^0(C,\omega_{C})\nonumber\\
	H^0(\widetilde{C},\omega_{\widetilde{C}})_-=\oplus_{\chi\in H^*\setminus\{1\}}
	H^0(\widetilde{C},\omega_{\widetilde{C}})^{\chi})=\oplus_{\chi\in H^*\setminus\{1\}} H^0(C,\omega_{C}\otimes L_{\chi^{-1}})  \label{abelian decomposition}
	\end{align}
	\subsection{Prym varieties of abelian covers}
	In this subsection we explain the constructions in section \ref{Prym} for an abelian group $H$ based on the constructions of section \ref{Abelian Galois covers}. So let $f:\widetilde{C}\to C$ be a $H$-Galois cover of $C$, with $H$ a finite abelian group. Recall the equivalent description of the Prym variety given in Lemma \ref{Prymdef}.
	For a Galois covering $f:\widetilde{C}\to C$ of a curve $C$ of genus $g$ as above, one can compute the genus $\widetilde{g}:= g(\widetilde{C})$ by the Riemann-Hurwitz formula. \par By what we already said in subsection \ref{Abelian Galois covers}, we have that
	\[P=P(\widetilde{C}/C)=\oplus_{\chi\in H^*\setminus\{1\}} H^0(C,\omega_{C}\otimes L_{\chi^{-1}})/\oplus_{\chi\in H^*\setminus\{1\}} H_1(\widetilde{C},\mathbb{Z})^{\chi},\]
	by virtue of \ref{abelian decomposition} and Lemma \ref{prymvar}.
	\subsection{Abelian covers of $\mathbb{P}^{1}$ and their Prym map}\label{Abelian covers of line}
	In this section, we follow closely \cite{CFGP} and also \cite{MZ} whose notations come mostly from \cite{W}. More details about abelian coverings and Prym varieties can be consulted from these two references respectively. For the latter, \cite{BL} is also a comprehensive reference. \par An abelian Galois cover of $\mathbb{P}^{1}$ is determined by a collection of equations in the following way: \par Consider an $m\times s$ matrix $A=(r_{ij})$ whose entries $r_{ij}$ are in $\mathbb{Z}/N\mathbb{Z}$ for some $N\geq 2$. Let $\overline{\mathbb{C}(z)}$ be the algebraic closure of $\mathbb{C}(z)$. For each $i=1,...,m,$ choose a function $w_{i}\in\overline{\mathbb{C}(z)}$ with
	\begin{equation}\label{equation abelian}
	w_{i}^{N}=\prod_{j=1}^{s}(z-z_{j})^{\widetilde{r}_{ij}}\text{ for }i=1,\dots, m,
	\end{equation}
	in $\mathbb{C}(z)[w_{1},\dots,w_{m}]$. Here $\widetilde{r}_{ij}$ is the lift of $r_{ij}$ to $\mathbb{Z} \cap [0,N)$ and $z_j\in \C$ for $j=1,2,\dots, s$. Notice that \ref{equation abelian} gives in general only a singular affine curve and we take a smooth projective model associated to this affine curve. We impose the condition that the sum of the columns of $A$ is zero (when considered as a vector in $(\mathbb{Z}/N\mathbb{Z})^m$). This implies that the cover given by \ref{equation abelian} is \emph{not} ramified over the infinity. We call the matrix $A$, the matrix of the covering. We also remark that all operations with rows and columns will be carried out over the ring $\mathbb{Z}/N\mathbb{Z}$, i.e., they will be considered modulo $N$. The local monodromy around the branch point $z_{j}$ is given by the column vector $(r_{1j},\dots, r_{mj})^{t}$ and so the order of ramification over $z_{j}$ is $\frac{N}{\gcd(N,\widetilde{r}_{1j},\dots,\widetilde{r}_{mj})}$. Using this and the Riemann-Hurwitz formula, the genus $g$ of the cover can be computed by:
	\begin{equation}
	g=1+d(\frac{s-2}{2}-\frac{1}{2N}\sum_{j=1}^{s}\gcd(N,\widetilde{r}_{1j},\dots,\widetilde{r}_{mj})),
	\end{equation}
	where $d$ is the degree of the covering which is equal, as pointed out above, to the column span (equivalently row span)  of the matrix $A$. In this way, the Galois group $\widetilde{G}$ of the covering will be a subgroup of $(\mathbb{Z}/N\mathbb{Z})^{m}$. Note also that this group is isomorphic to the column span of the above matrix.
	\begin{remark} \label{rowspan}
		Consider two families of abelian covers with matrices $A$ and $A^{\prime}$ over the same
		$\mathbb{Z}/N\mathbb{Z}$. If $A$ and $A^{\prime}$ have equal row spans then the two families are isomorphic. For more details, see \cite{W} or \cite{MZ}.
	\end{remark}
	\begin{remark} \label{abeliangroupcharacter}
		Let $G$ be a finite abelian group, then the character group $G^*=\Hom(G,\mathbb{C}^{*})$ is isomorphic to $G$. To see this, first assume that $G=\Z/N\Z$ is a cyclic group. Fix an isomorphism between $\mathbb{Z}/N\Z$ and the group of $N$-th roots of unity in $\mathbb{C}^{*}$ via $1\mapsto \exp(2\pi i/N)$. Now the group $G^*$ is isomorphic to this latter group via $\chi\mapsto \chi(1)$. In the general case, $G$ is a product of finite cyclic groups, so this isomorphism extends to an isomorphism $\varphi_G: G \xrightarrow{\sim} G^*$. In the sequel, we use this isomorphism frequently to identify elements of $G$ with its characters.
	\end{remark}
	For our applications, with notations as in the previous pages, we fix an isomorphism of $\widetilde{G}$ with a product of $\mathbb{Z}/n\Z$'s and an embedding of $\widetilde{G}$ into $(\mathbb{Z}/N\Z)^{m}$.\par Let $l_{j}$ be the $j$-th column of the matrix $A$. As mentioned earlier, the group $\widetilde{G}$ can be realized as the column span of the matrix $A$. Therefore we may assume that $l_{j}\in \widetilde{G}$. For a character $\chi$, $\chi(l_{j})\in \C^{*}$ and since $\widetilde{G}$ is finite $\chi(l_{j})$ will be a root of unity. Let $\chi(l_{j})=\exp(2\alpha_{j}\pi i/N)$, where $\alpha_{j}$ is the unique integer in $[0,N)$ with this property. Equivalently, the $\alpha_{j}$ can be obtained in the following way: let $n\in G\subseteq (\Z/N\Z)^{m}$ be the element corresponding to $\chi$ under the above isomorphism.
	We regard $n$ as an $1\times m$ matrix. Then the matrix product $n\cdotp A$ is meaningful and $n\cdotp A=(\alpha_{1},\dots,\alpha_{r})$.
	Here all of the operations are carried out in $\Z/N\Z$ but the $\alpha_{j}$ are regarded as    integers in $[0,N)$. Furthermore we set $\tilde\alpha_j = \sum_{i=1}^m n_i \tilde{r}_{ij} \in \Z$ (but $\tilde\alpha_j$ is not necessarily in $\Z\cap [0,N)$).
	Using the above facts, we occasionally consider a character of $\widetilde{G}$ as an element of this group without referring to isomorphism $\varphi_{\widetilde{G}}$.
	
	Let us denote by $\omega_X$ the canonical sheaf of $X$.
	Similar to the case of $\pi_{*}(\mathcal{O}_X)$, the sheaf $\pi_{*}(\omega_X)_{\chi}$ decomposes according to the action of $\widetilde{G}$.
	For the line bundles $L_{\chi}$ corresponding to the character $\chi$ associated to the element $a\in \widetilde{G}$ and $\pi_{*}(\omega_X)_{\chi}$ we have the following result proven in \cite{MZ}. 
	\begin{lemma} \label{eigenbundleformula}
		With notations as above $L_{\chi}=\mathcal{O}_{\mathbb{P}^{1}}(\displaystyle\sum_{1}^{s}\langle\frac{\tilde\alpha_j}{N}\rangle)$, where $\langle x\rangle$ denotes the fractional part of the real number $x$ and
		\[\pi_{*}(\omega_X)_{\chi}= \omega_{\mathbb{P}^{1}} \otimes L_{\chi^{-1}}=\mathcal{O}_{\mathbb{P}^{1}}(-2+\sum_{1}^{s}\langle -\frac{\alpha_j}{N}\rangle).\]
	\end{lemma}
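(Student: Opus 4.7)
The plan is to establish the two assertions in turn, via an explicit construction of a distinguished rational section of $L_{\chi}^{-1}$. Let $n=(n_{1},\dots,n_{m})\in \widetilde{G}\subseteq(\Z/N\Z)^{m}$ be the element corresponding to $\chi$ under Remark \ref{abeliangroupcharacter}. The monomial $\sigma:=w^{n}=\prod_{i=1}^{m} w_{i}^{n_{i}}$ is a rational function on $X$ that transforms under $\widetilde{G}$ by $\chi$, and is therefore a rational section of the $\chi$-eigenspace $(\pi_{*}\mathcal{O}_{X})^{\chi}=L_{\chi}^{-1}$. From \eqref{equation abelian} one obtains $\sigma^{N}=\prod_{j=1}^{s}(z-z_{j})^{\widetilde{\alpha}_{j}}$.

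To determine the class of $L_{\chi}$, the key step is to compute $\mathrm{div}(\sigma)$ as a rational section of $L_{\chi}^{-1}$ by localizing at each branch point and at infinity. Near $z_{j}$, with $\tau$ a uniformizer of $X$ above $z_{j}$, the ramification index is $e_{j}=N/\gcd(N,\widetilde{r}_{1j},\dots,\widetilde{r}_{mj})$ and $\ord_{\tau}(\sigma)=e_{j}\widetilde{\alpha}_{j}/N\in\Z$; the smallest integer shift $(z-z_{j})^{a}\sigma$ that is regular above $z_{j}$ occurs at $a=-\lfloor \widetilde{\alpha}_{j}/N\rfloor$, yielding a local $\mathcal{O}_{\mathbb{P}^{1},z_{j}}$-generator of the $\chi$-eigenspace. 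At infinity, the vanishing of the column sum of $A$ in $(\Z/N\Z)^{m}$ forces $\pi$ to be \'etale there, and writing $w_{i}=z^{(\sum_{j}\widetilde{r}_{ij})/N}w_{i}'$ with $w_{i}'$ a unit near $\infty$ gives $\sigma=z^{\sum_{j}\widetilde{\alpha}_{j}/N}\cdot u$ with $u$ a unit. Assembling the local generators one obtains
\[
\mathrm{div}(\sigma)=\sum_{j=1}^{s}\lfloor \widetilde{\alpha}_{j}/N\rfloor\, z_{j}-\Big(\sum_{j=1}^{s}\widetilde{\alpha}_{j}/N\Big)\cdot\infty,
\]
and substituting $\widetilde{\alpha}_{j}=N\lfloor \widetilde{\alpha}_{j}/N\rfloor+\alpha_{j}$ together with $(z-z_{j})\sim z_{j}-\infty$ in $\pic(\mathbb{P}^{1})$ collapses this class to $-\sum_{j}\langle \widetilde{\alpha}_{j}/N\rangle z_{j}$, giving $L_{\chi}\cong\mathcal{O}_{\mathbb{P}^{1}}(\sum_{j}\langle \widetilde{\alpha}_{j}/N\rangle z_{j})$. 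The hypothesis $\sum_{j}l_{j}=0$ in $\widetilde{G}$, equivalently $\sum_{j}\alpha_{j}\equiv 0\pmod{N}$, guarantees that this coefficient sum is a non-negative integer.

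For the second formula, I would apply relative Grothendieck duality to the finite flat morphism $\pi\colon X\to\mathbb{P}^{1}$, which yields $\pi_{*}\omega_{X}\cong \omega_{\mathbb{P}^{1}}\otimes\mathcal{H}om_{\mathcal{O}_{\mathbb{P}^{1}}}(\pi_{*}\mathcal{O}_{X},\mathcal{O}_{\mathbb{P}^{1}})$. Decomposing into $\widetilde{G}$-isotypic components and using that the contragredient dual of the $\chi^{-1}$-eigenspace of $\pi_{*}\mathcal{O}_{X}$ carries the $\chi$-action, one obtains $(\pi_{*}\omega_{X})_{\chi}=\omega_{\mathbb{P}^{1}}\otimes L_{\chi^{-1}}$. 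Substituting the first formula with $\chi^{-1}$ in place of $\chi$ (so $n$ becomes $-n$ and $\widetilde{\alpha}_{j}$ becomes $-\widetilde{\alpha}_{j}$), and noting that $\langle -\widetilde{\alpha}_{j}/N\rangle=\langle -\alpha_{j}/N\rangle$ since both depend only on $\widetilde{\alpha}_{j}\bmod N$, produces $\pi_{*}(\omega_{X})_{\chi}=\mathcal{O}_{\mathbb{P}^{1}}(-2+\sum_{j}\langle -\alpha_{j}/N\rangle)$, as stated.

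The main obstacle is the careful local bookkeeping of fractional parts: one must verify that the shift $(z-z_{j})^{-\lfloor \widetilde{\alpha}_{j}/N\rfloor}\sigma$ really generates (and not merely lies in) the $\chi$-eigenspace locally at $z_{j}$, and that the no-ramification-at-$\infty$ hypothesis is precisely what makes $\sum_{j}\widetilde{\alpha}_{j}/N$ an integer, so that the resulting class is a genuine line bundle rather than a $\mathbb{Q}$-divisor. Once these local points are pinned down, the passage to the global divisor class and the duality argument are formal.
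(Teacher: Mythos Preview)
The paper does not supply its own proof of this lemma: it is simply quoted as ``the following result proven in \cite{MZ}'', and the identity $(\pi_*\omega_X)_\chi=\omega_{\mathbb{P}^1}\otimes L_{\chi^{-1}}$ is separately attributed to Pardini \cite{P1}. So there is nothing in the paper to compare your argument against directly.

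That said, your proposal is a correct and self-contained proof. The construction of the rational eigen-section $\sigma=\prod_i w_i^{n_i}$ and the local divisor computation at each $z_j$ and at $\infty$ are exactly the right ingredients. Two small points worth tightening: first, the claim that $(z-z_j)^{-\lfloor\tilde\alpha_j/N\rfloor}\sigma$ \emph{generates} (rather than merely lies in) the stalk of $L_\chi^{-1}$ at $z_j$ follows because its order at any preimage $p$ of $z_j$ equals $e_j\langle\tilde\alpha_j/N\rangle<e_j$, so it is not divisible by $(z-z_j)$ in the $\chi$-eigenspace---you flag this as an obstacle but it is straightforward once stated. Second, your use of relative duality for the formula $(\pi_*\omega_X)_\chi=\omega_{\mathbb{P}^1}\otimes L_{\chi^{-1}}$ is the standard route (and is what Pardini does in greater generality); the passage from $\tilde\alpha_j$ to $-\alpha_j$ when replacing $\chi$ by $\chi^{-1}$ is justified exactly as you say, since the fractional part depends only on the residue modulo $N$.

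In short: your argument is sound, and it is presumably close in spirit to what \cite{MZ} does, since the monomial eigen-section $\sigma$ is the natural object to work with for covers presented by equations of the form~\eqref{equation abelian}.
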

	Let $n\in\widetilde{G}$ be the element $(n_1,\dots, n_m)\in\widetilde{G}\subset (\mathbb{Z}/N\mathbb{Z})^{m}$. By Lemma \ref{eigenbundleformula}, $\dim H^0(\widetilde{C},\omega_{\widetilde{C}})_{n}=-1+\displaystyle \sum_{j=1}^{s}\langle-\frac{\alpha_{j}}{N}\rangle$. A basis for the $\mathbb{C}$-vector space $H^0(\widetilde{C},\omega_{\widetilde{C}})$ is given by the forms
	\begin{equation}
	\omega_{n,\nu}=z^{\nu} w_{1}^{n_1}\cdots w_{m}^{n_m}\displaystyle \prod_{j=1}^{s} (z-z_j)^{\lfloor -\frac{\tilde{\alpha_j}}{N}\rfloor}dz.
	\end{equation}
	Here $0\leq\nu\leq -1+\displaystyle \sum_{j=1}^{s}\langle-\frac{\alpha_{j}}{N}\rangle$. The fact that the above elements constitute a basis can be seen in \cite{MZ}, proof of Lemma 5.1, where the dual version for $H^1(C,\mathcal{O}_C)$ is proved. \par
	The general method of our later computations in Section \ref{examples} is as follows: We remark that if $n=(n_1,\dots,n_m)\in\widetilde{G}=\mathbb{Z}_{d_1}\times\cdots\times\mathbb{Z}_{d_m}\subset(\mathbb{Z}/N\mathbb{Z})^{m}$, we consider the $n_i\in [0,N)$ and their sum as integers. \par The action of the abelian subgroup $H$ is naturally inherited from that of $\tilde G$ and the latter is described as follows: Let $g=(g_1,\dots, g_m)\in\tilde G$ and write $\ord g_i=v_i$. Then the action of $g$ on each $w_i$ is given by $g\cdotp w_i=\xi_{v_i}w_i$, where $\xi_{v_i}$ denotes a $v_i$-th primitive root of unity.

	With this notation, $H^0(\widetilde{C},\omega_{\widetilde{C}})_+$, i.e., the group of $H$-invariant differential forms is the set of all $\omega_{n,\nu}$ with $\sum n_i/a_i\in\mathbb{Z}$ for all $h=(h_1,\dots,h_m)\in H$ (with $a_i=\ord h_i$).

	The eigenspace $H^0(\widetilde{C},\omega_{\widetilde{C}})_-$ is then given by the complement, i.e., the set of all $\omega_{n,\nu}$ for whom there exists $h=(h_1,\dots,h_m)\in H$ such that $\sum n_i/a_i\notin\mathbb{Z}$.\par Families of abelian covers of $\P^1$ can be constructed as follows:  Let $T_{s}\subset (\mathbb{A}_{\C}^{1})^{s}$ be the complement of the big diagonals, i.e., $T_{s}= \{(z_{1},\dots,z_{s})\in(\mathbb{A}_{\C}^{1})^{s}\mid z_{i}\neq z_{j} \forall i\neq j \}$. Over this affine open set we define a family of abelian covers of $\mathbb{P}^{1}$ by the equation \ref{equation abelian} with branch points $(z_{1},\dots,z_{s})\in T_{s}$ and $\widetilde{r}_{ij}$ the lift of $r_{ij}$ to $\mathbb{Z}\cap[0,N)$ as before. Varying the branch points we get a family $f:\tilde{\sC}\to T_s$ of smooth projective curves over $T_s$ (viewed as a complex manifold of dimension $s-3$) whose fibers $\tilde{C}_t$ are abelian covers of $\mathbb{P}^{1}$ introduced above.
	\section{Shimura subvarieties} \label{Shimura}
	Recall from section \ref{Prym} that $A_{p,D}=\H_p/\Gamma_D$ is the moduli space of polarized abelian varieties of type $D$, where $\H_p:=\{M\in M_p(\C)\mid ^tM=M, \im M\geq 0\}$, is the Siegel upper half space  of genus $p$ and $\Gamma_D=\{R\in\gl_{2p}(\Z)\mid R\begin{pmatrix}
	
	0  &D\\
	
	-D &0
	
	\end{pmatrix}  {^tR}=\begin{pmatrix}
	
	0  &D\\
	
	-D &0
	\end{pmatrix}\}$ is an arithmetic subgroup. Note that $\H_p=\gsp_{2p}(\R)/K$, where $\gsp_{2p}$ is the standard $\Q$-group of symplectic similitudes on the standard symplectic $\Q$-space $\Q^{2p}$ and $K$ is a maximal compact subgroup. So $A_{p,D}$ can be written as a double quotient $\Gamma_D\backslash\gsp_{2p}(\R)/K$. Such double quotients are called Shimura variety and their structure has beeen studied extensively. A special (or Shimura) subvariety of $A_{p,D}$ is then an algebraic subvariety of the form $Y=\Gamma\backslash\D\hookrightarrow A_{p,D}$ induced by an injective homomorphism $\L\hookrightarrow \Sp_{2p}(\R)$ of algebraic groups. In particular, it is a totally geodesic subvariety.\par Let $\widetilde{G}$ be a finite group and consider a family $\widetilde{\sC}\to T_s$ whose fibers $\widetilde{C}_t$ are $\widetilde{G}$-Galois coverings of $\mathbb{P}^{1}$ with a fixed Prym datum $\Sigma\coloneqq(\tilde{G}, \tilde{\theta}_s,H)$. Associating to $t\in T_s$ the class of the pair $((C_t,x_1,\dots,x_r),\pi_t:\widetilde{C}_t\to C_t)$ gives a map $T_s\to R(H,g,r)$ with discrete fibers. We denote the image of this map by $R(\Sigma)$. It follows that $R(\Sigma)$ is a subvariety of dimension equal to $s-3$, see also \cite{CFGP}, p. 6. As in the last section, set $V_t=H^0(\widetilde{C}_t,\omega_{\widetilde{C}_t})$ and let $V_t=V_{+,t}\oplus V_{-,t}$ be the decomposition under the action of $H$. There is also the corresponding Hodge decomposition $H^1(\widetilde{C}_t,\mathbb{C})_-=V_{-,t}\oplus \overline{V}_{-,t}$. Set $\Lambda_t=H_1(\widetilde{C}_t,\mathbb{Z})_-$. The associated Prym variety is by \ref{Prymdef1}, $P(\widetilde{C}_t/C_t)=V^{*}_{-,t}/\Lambda_t$, an abelian variety of dimension $p=\widetilde{g}-g$. So we have a map $R(\Sigma)\xrightarrow{\mathscr{P}} A_{p,D}$.\par In this paper, we are interested in determining whether the subvariety $Z=\overline{\mathscr{P}(R(\Sigma))}\subset A_{p,D}$ is a special or Shimura subvariety. We remark that one can, possibly after replacing $T_s$ with a suitable finite cover, endow the abelian scheme with a level structure and hence consider the resulting map $T_s\to A_{p,D,n}$ to the fine moduli space. Note that the answer to the above question is independent of the level structure or the polarization. Alternatively, we can consider $A_{p,D}$ as a coarse moduli space. The Prym varieties of the fibers of the family $\widetilde{C}_t\to T_s$ fit into a family $P\rightarrow T_s$ which is an abelian scheme over $T_s$ that admits naturally an action of the group ring $\mathbb{Z}[\widetilde{G}]$. This action defines a Shimura subvariety of PEL type $P(\widetilde G)$ in $A_{p,D}$ (or in the stack $\mathcal{A}_{p,D}$) that contains $Z$. The following constrcution of the subvariety $P(\widetilde G)$ is adapted for the case of  Prym varieties from \cite{MO}, see also the paper \cite{M10} and also \cite{FGP} for a different approach. Fix a base point $t \in T_s$ and let $(P_t,\lambda)$ be the corresponding Prym variety with $\lambda$ as its polarization of type $D$. Let $(V_{\mathbb{Z}},\psi)$ be as above. We fix a symplectic similitude $\sigma :H^{1}(P_t, \mathbb{Z})\to V_{\mathbb{Z}}$. Let $F=\mathbb{Q}[\widetilde{G}]$. The group $\widetilde{G}$ acts on $H^0(\widetilde{C}_t,\omega_{\widetilde{C}})_-$ and thereby on the Prym variety $P(\widetilde{C}_t/C_t)$. We therefore view $H^0(\widetilde{C}_t,\omega_{\widetilde{C}})_-$ as an $F$-module. Via $\sigma$, the Hodge structure on $H^1(P_t,\mathbb{Q})=H^1(\widetilde{C}_t,\mathbb{Q})_-$ corresponds to a point $y\in\H_p$ and one obtains the structure of an $F$-module on $V_{\mathbb{Q}}$. $F$ is isomorphic to a product of cyclotomic fields and is equipped with a natural involution $*$ which is complex conjugation on each factor. The polarization $\psi$ on $V_{\mathbb{Q}}$ satisfies
	\[\psi(bu,v)=\psi(u,b^{*}v) \text{ for all } b\in F \text{ and } u,v \in V.\]
	Define the subgroup $N$ as in \cite{MO}:
	\begin{equation}\label{subgroup}
	N= \gsp(V_{\mathbb{Q}}, \psi)\cap \gl_{F}(V_{\mathbb{Q}}).
	\end{equation}
	If $h_{0}: \mathbb{S}\rightarrow \gsp_{2p, \mathbb{R}}$ is the Hodge structure on $V_{\mathbb{Z}}=H^{1}(P_{t},\mathbb{Z})$ corresponding to the point $y \in \H_p$, then by the above $F$-action this homomorphism factors through the subvariety $N_{\mathbb{R}}$. 
	As $A_{p,D}$ has the structure of a Shimura variety, one can talk about its Shimura (or special) subvarieties. Let $L=\gsp_{2p}$. Define the subset $Y_N\subseteq\H_p$ as follows.
	\[Y_N=\{h: \mathbb{S}\rightarrow L_{2p, \mathbb{R}}| h \text{ factors through } N_{\mathbb{R}}\}.\]
	The point $y$ lies in $Y_{N}$ and there is a connected component $Y^{+}\subseteq Y_{N}$ which contains $y$. We define $P(\widetilde{G})$ to be the image of $Y^{+}$ under the map
	\[\H_p\to L(\mathbb{Z})\setminus\H_p\cong
	L(\mathbb{Q})\setminus\H_p\times
	L(\mathbb{A}_f)/L(\widehat{\mathbb{Z}})\cong A_{p,D}(\mathbb{C}).\]
	If $Y^{+}$ is a connected component of $Y_N$ and $\gamma K_n\in L(\mathbb{A}_f)/K_n$, the image of $Y^{+}\times \{\gamma K_n\}$ in $A_{p,D}$ is an algebraic subvariety. We define a \emph{Shimura subvariety} as an algebraic subvariety $S$ of $A_{p,D}$
	which arises in this way, i.e., there exists a connected component
	$Y^+\subset Y_N$ and an element $\gamma K_n\in
	L(\mathbb{A}_f)/K_n$ such that $S$ is the image of $Y^{+}\times
	\{\gamma K_n\}$ in $A_{p,D}$.\par For $t=(z_{1},\dots,z_{s})\in T_s$, let
	$((C_t,x_1,\dots,x_r),\pi_t:\widetilde{C}_t\to C_t)\in R(H,g,r)$ be the covering corresponding to
	$t$. For this $t$, consider the Hodge decomposition
	$H_1(\widetilde{C}_t,\mathbb{C})_-=V_{-,t}\oplus\overline{V}_{-,t}$
	which corresponds to a complex structure on
	$H_1(\widetilde{C}_t,\mathbb{R})_-$. We therefore get a point
	$f(t)\in\H_{p}$. Indeed we obtain a morphism
	$f:T_s\to\H_{p}$ and the following commutative diagram.
	\begin{equation} \label{normal diag}
	\begin{tikzcd} 
	T_s \arrow{r}{f} \arrow[swap]{d}{\iota^0} & \H_{p} \arrow{d}{\iota} \\
	R(\Sigma) \arrow{r}{\mathscr{P}} & A_{p,D}
	\end{tikzcd}
	\end{equation}
	It follows by construction of $P(\widetilde{G})$ that $Z\subseteq P(\widetilde{G})$. As we remarked earlier, the Prym map is not in general injective. In order to conclude the equality $Z= P(\widetilde{G})$ and hence the speciality of $Z$, we still need to assure that the differential of the Prym map on $R(\Sigma)$ is injective, whence $\dim R(\Sigma)=\dim \mathscr{P}(R(\Sigma))$. For this purpose, set $V=H^0(\widetilde{C},\omega_{\widetilde{C}})=V_+\oplus V_-$ and likewise $W=H^0(\widetilde{C},\omega^{\otimes 2}_{\widetilde{C}})=W_+\oplus W_-$. The multiplication map $m:S^2V\to W$ is the codifferential of the Torelli map and the codifferential of the Prym map at a given point coincides with the restriction of the multiplication map $m$ to $S^2V_-$. Therefore, we need that the following map is an isomorphism 
	\begin{equation}
	\label{nostramram}
	\tag{B} m : (S^2 V_-)^{\widetilde{G}} \to W_+^{\widetilde{G}}.
	\end{equation}
	This implies the condition
	\begin{equation}
	\label{condA}
	\tag{A} \dim (S^2V_-)^{\widetilde{G}} = s-3.
	\end{equation}
	A sufficient condition ensuring (B) is 
	\begin{equation}
	\label{condB1}
	\tag{B1} (S^2V_-)^{\widetilde{G}} \cong Y_1 \otimes Y_2
	\end{equation}
	where $\dim Y_1 =1$, $\dim Y_2 = s-3$. 
	
	Notice that the condition (B1) implies the condition (B). Another sufficent condition ensuring (B) is the following. Suppose there is an isogeny decomposition of the prym variety as follows 
	\begin{equation*}\tag{B2}\label{condB2}
	P(\widetilde G)\sim A\times  JC',
	\end{equation*}
	where $A$ is a fixed abelian variety and $JC'$ is the jacobian of a curve $C':=\widetilde C/K$ defined as a quotient of $\widetilde C$ by a normal subgroup $K\lhd\widetilde G$, such that the Galois cover $C'\to\P^1=C'/(\widetilde G/K)$ is branched in $r$ points and this family satisfies condition $(*)$ of \cite{FGP}, hence it gives rise to a special subvariety.
	Therefore, since $A$ is fixed and $JC'$ moves in a Shimura family, then the family of the pryms $P(\widetilde G)$ yields a special subvariety too, see \cite [Thm. 3.8]{FGS}. We have
	\begin{theorem}
		If the condition (B) holds for some $t\in T_s$, then the subvariety $Z$ is a special
		subvariety.
	\end{theorem}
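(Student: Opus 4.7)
The plan is to prove the equality $Z = P(\widetilde G)$, from which the theorem follows since $P(\widetilde G)$ is by construction a PEL-type Shimura subvariety of $A_{p,D}$. The inclusion $Z \subseteq P(\widetilde G)$ is already in hand from the preamble: by the diagram (\ref{normal diag}), the period map $f:T_s \to \H_p$ factors through the component $Y^+$ of Hodge structures compatible with the $\Q[\widetilde G]$-action, so its image in $A_{p,D}$ lands in $P(\widetilde G)$. Since $Z$ is irreducible (as the Zariski closure of the image of the irreducible $T_s$) and $P(\widetilde G)$ is irreducible (as the image of the connected $Y^+$), it suffices to prove $\dim Z = \dim P(\widetilde G)$.

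I would first compute $\dim Z$. By the discussion preceding the statement, the codifferential of the restricted Prym map $\mathscr{P}|_{R(\Sigma)}$ at $t$ is identified with the multiplication map $m:(S^2V_-)^{\widetilde G} \to W_+^{\widetilde G}$: the restriction of the full codifferential $m:S^2V \to W$ of the Torelli map to the $\widetilde G$-invariant summands that correspond, respectively, to the cotangent direction at $[P_t]$ along $P(\widetilde G)$ and to the cotangent direction at $t$ along $R(\Sigma)$. Condition (B) asserts this is an isomorphism, in particular surjective, so $d\mathscr{P}|_{R(\Sigma)}$ is injective at $t$. Since injectivity of the differential is an open condition on the smooth irreducible $R(\Sigma)$, it holds on a Zariski-dense open subset, and therefore $\dim Z = \dim \mathscr{P}(R(\Sigma)) = \dim R(\Sigma) = s-3$.

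For $\dim P(\widetilde G)$, the general description of PEL Shimura varieties identifies the cotangent space at $[P_t]$ with $(S^2V_-)^{\widetilde G}$, namely the $\widetilde G$-invariants of the cotangent $S^2 V_-$ to $A_{p,D}$ at $[P_t]$; whence $\dim P(\widetilde G) = \dim(S^2V_-)^{\widetilde G}$. Condition (B) further gives this equal to $\dim W_+^{\widetilde G} = s-3$. Combined with the inclusion $Z \subseteq P(\widetilde G)$ and the matching irreducible dimensions, we conclude $Z = P(\widetilde G)$, a Shimura subvariety of $A_{p,D}$. The main obstacle is the clean identification of the two cotangent spaces --- that of $R(\Sigma)$ at $t$ with $W_+^{\widetilde G}$ and that of $P(\widetilde G)$ at $[P_t]$ with $(S^2V_-)^{\widetilde G}$ --- both of which are standard consequences of Kodaira--Spencer theory and the PEL construction of $P(\widetilde G)$ from the endomorphism algebra $F=\Q[\widetilde G]$, but which require careful bookkeeping of the $\widetilde G$-action on the variation of Hodge structure along $T_s$.
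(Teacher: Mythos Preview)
Your proposal is correct and follows essentially the same approach as the paper's proof: both use condition~(B) twice, once to ensure the differential of $\mathscr{P}|_{R(\Sigma)}$ is injective (yielding $\dim Z = s-3$), and once to compute $\dim P(\widetilde{G}) = \dim(S^2V_-)^{\widetilde{G}} = s-3$, then conclude $Z = P(\widetilde{G})$ from the inclusion and matching dimensions. Your write-up is in fact more explicit than the paper's, which compresses these two uses into the single sentence ``the assumption implies that $\dim Y^{+}=\dim T_s=s-3$'' together with discreteness of the vertical maps in diagram~(\ref{normal diag}).
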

	\begin{proof}
		Let $N$ be the subgroup in \ref{subgroup}. If $Y^{+}$ is a connected
		component of $Y_N$ whose image in $A_{p,D}$ is $P(\widetilde{G})$, then the
		assumption implies that $\dim Y^{+}=\dim T_s=s-3$. As the vertical
		rows in \ref{normal diag} are discrete, one concludes that $\dim\mathscr{P}(
		R(\Sigma))=\dim P(\widetilde{G})=s-3$. This together with the fact that
		$Z\subseteq P(\widetilde{G})$ implies that $Z=P(\widetilde{G})$.
	\end{proof}
When $\tilde G$ is abelian, the following Lemma computes the dimension of $P(\widetilde{G})$. 
	\begin{lemma} \label{dim P(G)}
		Let $d_n=H^{1,0}( P(\widetilde{G}))_n=H^0(\widetilde{C},\omega_{\widetilde{C}})_{-,n}$, then 
		\[\dim P(\widetilde{G})= \sum_{2n\neq 0} d_{n}d_{-n}+\frac{1}{2}\sum_{2n=0}d_{n}(d_{n}+1).\] 
	\end{lemma}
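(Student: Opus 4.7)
The plan is to identify $\dim P(\widetilde{G})$ with the complex dimension of the period domain $Y^{+}\subseteq \H_p$ cutting out the PEL Shimura subvariety, and then to compute this dimension one character of $\widetilde G$ at a time. Since $P(\widetilde{G})$ is by construction the image of $Y^{+}$ in $A_{p,D}$ under a locally finite map, we have $\dim P(\widetilde G)=\dim_\C Y^{+}$, so it suffices to count the independent parameters in a $\widetilde G$-equivariant Hodge filtration on $V_\Q=H^1(P(\widetilde C/C),\Q)$.

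Since $\widetilde G$ is abelian, $V_\C\coloneqq H^1(P(\widetilde C/C),\C)=H^1(\widetilde C,\C)_-$ splits as $V_\C=\bigoplus_n V_\C^n$ over the characters $n$ of $\widetilde G$ (identified with group elements as in Remark \ref{abeliangroupcharacter}). Complex conjugation sends the $n$-eigenspace to the $(-n)$-eigenspace, so the Hodge decomposition $V_-\oplus\overline{V}_-$ refines to $V_\C^n=V_{-,n}\oplus\overline{V_{-,-n}}$, whence $\dim V_\C^n=d_n+d_{-n}$ and $\dim(V^{1,0}\cap V_\C^n)=d_n$. Using the compatibility $\psi(bu,v)=\psi(u,b^{*}v)$ of the polarization with the $\Q[\widetilde G]$-action, where $b^{*}$ inverts group elements, a direct check on eigenvectors shows that $\psi$ pairs $V_\C^n$ non-degenerately with $V_\C^{-n}$ and vanishes on every other combination of eigenspaces. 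Consequently $V_\C^n$ is $\psi$-isotropic when $2n\neq 0$ while $V_\C^n\oplus V_\C^{-n}$ is symplectic, whereas for $2n=0$ the space $V_\C^n$ is itself symplectic of dimension $2d_n=d_n+d_{-n}$.

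The period domain $Y^{+}$ parametrizes $\widetilde G$-stable, $\psi$-Lagrangian filtrations $F^1\subset V_\C$ satisfying Riemann positivity. By equivariance $F^1=\bigoplus_n F^1_n$ with $\dim F^1_n=d_n$. For each pair $\{n,-n\}$ with $2n\neq 0$, the isotropy of $V_\C^n$ reduces the Lagrangian condition to $F^1_{-n}=(F^1_n)^{\perp_\psi}\cap V_\C^{-n}$, so the pair is rigidified by $F^1_n$ alone; $F^1_n$ ranges freely (subject to Riemann positivity, which is an open condition) over the Grassmannian $\mathrm{Gr}(d_n,d_n+d_{-n})$ of complex dimension $d_n d_{-n}$, and this accounts for one contribution per pair. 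For $n$ with $2n=0$, $F^1_n$ is a Lagrangian subspace of the symplectic space $V_\C^n$, cut out by Riemann positivity to the Siegel upper half space $\H_{d_n}$ of complex dimension $\tfrac{1}{2}d_n(d_n+1)$.

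Adding the contributions, with the first sum understood as indexed by the equivalence classes $\{n,-n\}$ under $n\mapsto -n$, yields the stated formula. The main obstacle is the bookkeeping: verifying that the Lagrangian identification really pins down $F^1_{-n}$ from $F^1_n$, which reduces to the non-degeneracy of $\psi$ on $V_\C^n\oplus V_\C^{-n}$ established above, and matching the summation convention in the statement with the one-contribution-per-pair count dictated by this rigidification.
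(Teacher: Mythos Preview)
Your proof is correct and follows essentially the same approach as the paper's: both decompose $V_{\C}$ into $\widetilde G$-eigenspaces, use the polarization to pair the $n$- and $(-n)$-eigenspaces, and then count dimensions character by character (with the first sum in the formula read, as you note, over unordered pairs $\{n,-n\}$). The only difference is presentational: the paper works infinitesimally, identifying $T_y(Y_N)$ with the space of $\widetilde G$-equivariant symmetric maps $H^{1,0}\to H^{0,1}$ and then decomposing these into components $\beta_n$, while you parametrize $Y^{+}$ globally via Grassmannians and Lagrangian Grassmannians---an equivalent viewpoint, since the tangent spaces of these Grassmannians recover exactly the paper's $\Hom$-description.
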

	Note that $2.0=0$ in $\widetilde G$, so in fact the second sum in the right hand side of the above equality is always meaningful and if $|\widetilde{G}|$ is an odd number it will be zero.
	\begin{proof} We calculate $\dim T_{y}(Y_{N})$ at the point $y\in\H_{p}$. The dimension of the tangent space of $P(\widetilde{G})$ at the point $y$ will be equal to this number. To compute $\dim T_{y}(\H_{p})$, we first remark that the polarization induces a perfect pairing $\overline{\phi}:H^{1,0}\times V_{\mathbb{C}}/H^{1,0}\rightarrow\C$. Then the tangent bundle $T_{y}(\H_{p})$ can be identified with
		\[\Hom^{sym}(H^{1,0}, V_{\mathbb{C}}/H^{1,0}):=\{\beta : H^{1,0}\rightarrow V_{\mathbb{C}}/H^{1,0} \mid\overline{\phi}(v,\beta(v^{\prime}))=\overline{\phi}(v^{\prime},\beta(v))\forall v,v^{\prime} \in H^{1,0}\},\]
		i.e., the elements of $T_{y}(\H_{p})$ that are their own dual via the isomorphisms induced by $\overline{\phi}$.  For a more detailed discussion, see \cite{MO}. Furthermore notice that $V_{\mathbb{C}}/H^{1,0}=H^{0,1}$ and that $\overline{\phi}$ respects the Galois group action, namely it reduces to $\overline{\phi}_n:H^{1,0}_n\times H^{0,1}_{-n}\rightarrow\C$ for every character $n$ of $\widetilde{G}$. The subspace $T_{y}(Y_{N})\subset T_{y}(\H_{p})$ consists therefore of $\beta\in\Hom^{sym}(H^{1,0}, V_{\mathbb{C}}/H^{1,0})$ (symmetric with respect to $\overline{\phi}$) that respect the $F$-action on $V$, that is, are $F_{\mathbb{C}}$-linear. Any such $\beta$ can be written as the sum $\sum \beta_{n}$, where $\beta_{n}:H^{1,0}_{\mathbb{C},n}\rightarrow H^{0,1}_{\mathbb{C},n}$ is the induced action on the eigenspaces. These $\beta_{n}$ should satisfy the relation
		\[\overline{\phi}_{n}(v, \beta_{-n}(v^{\prime}))=\overline{\phi}_{-n}(v^{\prime}, \beta_{n}(v)).\] 
		The perfect pairing $\overline{\phi}_{n}$ gives a duality between $H^{1,0}_{\mathbb{C},n}$ and $H^{0,1}_{\mathbb{C}, (-n)}$. So we have a duality between $\beta_{n}$ and $\beta_{-n}$ if $n\neq -n$ in $\widetilde{G}$. If $n= -n$ in $\widetilde{G}$, i.e., if $2n=0$ in $\widetilde{G}$ this gives a self duality for $\beta_{n}$. Therefore $\dim T_{y}(Y_{N})$ is equal to
		$\sum_{2n\neq 0} d_{n}d_{-n}+\frac{1}{2}\sum_{2n=0}d_{n}(d_{n}+1).$
	\end{proof}
	Note that the above proof implies that $\beta=\sum\beta_{n}\in\sym^2(H^{1,0}_{\mathbb{C}})^G$. In particular, it follows that $\dim P(\widetilde{G})=\dim (S^2H^0(\widetilde{C},\omega_{\widetilde{C}})_-)^{\widetilde{G}}$ see also \cite{FGP}, Theorem 3.6. 
	
	\section{Examples} \label{examples}
	In this section, we work out some the details of some of the examples given in the table on page 21. 
	\begin{itemize}
		\item 
		Consider the family given by the monodromy data $(6,(1,3,4,4))$, i.e., the family $w^{6}=(z-z_1)(z-z_2)^3(z-z_3)^4(z-z_4)^4$. This family has Galois group $\mathbb{Z}_{6}$ and fiber genus $3$. The quotient by the subgroup $\Z_3$ gives rise to a triple cover $\widetilde{C}_t\to C_t$ which is totally ramified at 5 points, so that the Prym family is contained in $R_{3,[5]}$ in the notation of \cite{CFGP}. This family corresponds to the example with data $(r,\tilde g, \#)=(4,3,2)$ in the table. The quotient curve $C_t$ corresponds to the $\Z_2$-covering $w^{2}=(z-z_1)(z-z_2)$, so that $C\cong\P^1$. Therefore $P(\widetilde{C}_t/C_t)$ is isogeneous to $J(\widetilde{C}_t)$. By the results of \cite{M10}, this latter family is a special family of Jacobians and hence the family of Pryms is also special. \\
		
		Alternatively, one could use the special subvariety $P(\widetilde{G})$ to prove that the family is special.
		
		The automorphism $\sigma$ of order 3 corresponds to the automorphism $w\mapsto \xi_3w$, where $\xi_3$ is a primitive 3rd root of unity. Using this action, we compute the eigenspace $H^0(\widetilde{C},\omega_{\widetilde{C}})_-$. We have that
		\[H^0(\widetilde{C},\omega_{\widetilde{C}})_-=H^0(\widetilde{C},\omega_{\widetilde{C}})_1\oplus H^0(\widetilde{C},\omega_{\widetilde{C}})_2 \oplus H^0(\widetilde{C},\omega_{\widetilde{C}})_4\oplus H^0(\widetilde{C},\omega_{\widetilde{C}})_5,\]
		where $H^0(\widetilde{C},\omega_{\widetilde{C}})_i$ is the eigenspace w.r.t the character $i\in \Z_{6}$. For a cyclic cover, these are standard to compute, e.g. \cite{M10}, p.799. We have that $\dim H^0(\widetilde{C},\omega_{\widetilde{C}})_1=\dim H^0(\widetilde{C},\omega_{\widetilde{C}})_5=1$, $\dim H^0(\widetilde{C},\omega_{\widetilde{C}})_2=0, \dim H^0(\widetilde{C},\omega_{\widetilde{C}})_4=1$.
		The group $\widetilde{G}=\Z_6$ acts on $H^0(\widetilde{C},\omega_{\widetilde{C}})_-$ by $w\mapsto \xi_6 w$ so that we have $H^0(\widetilde{C},\omega_{\widetilde{C}})_{-,i}=H^0(\widetilde{C},\omega_{\widetilde{C}})_i$. 
		Now, we can compute the dimension of $P(\widetilde{G})$: This is equal to $\dim (S^2H^0(\widetilde{C},\omega_{\widetilde{C}})_-)^{\widetilde{G}}$, as we remarked earlier. Note that 
		\begin{equation*}
		(S^2H^0(\widetilde{C},\omega_{\widetilde{C}})_-)^{\widetilde{G}}=H^0(\widetilde{C},\omega_{\widetilde{C}})_1\otimes H^0(\widetilde{C},\omega_{\widetilde{C}})_5
		\end{equation*} 
		So $\dim P(\widetilde{G})=1$. 
		\item As an abelian and non-cyclic example consider the $\mathbb{Z}_{3}\times\mathbb{Z}_{3}$-cover of $\P^1$ given by the matrix 
		$\begin{pmatrix} 1&1&1&0\\
		0&0&2&1
		\end{pmatrix}$. In other words, this family is given by equations
		\begin{gather*}
		w_1^{3}=(z-z_1)(z-z_2)(z-z_3)\\ \nonumber
		w_2^{3}=(z-z_3)^2(z-z_4)
		\end{gather*}
		
		This family is one of families with abelian Galois group which give rise to Special subvarieties in the Torelli locus, see \cite{MO} or \cite{MZ} for more details. \\
		
		Consider the quotient $\mathbb{Z}_{3}\times\mathbb{Z}_{3}\to\mathbb{Z}_{3}$ by the second factor. This corresponds to a cover $\widetilde{C}_t\to C_t$ which is totally ramified in 3 points and $g(C_t)=1$. In fact the quotient curve $C_t$ is just given by the first of the above equations $w^{3}=(z-z_1)(z-z_2)(z-z_3)$ or equivalently by the first row of the above matrix. This family corresponds to the example with data $(r,\tilde g, \#)=(4,4,7)$ in the table. The automorphism $\nu$ of order 3 corresponds to the automorphism $w_1\mapsto w_1, w_2\mapsto \xi_3w_2$, where $\xi_3$ is a primitive 3rd root of unity. Using this action, we compute the eigenspace $H^0(\widetilde{C},\omega_{\widetilde{C}})_-$. We have that
		\[H^0(\widetilde{C},\omega_{\widetilde{C}})_-=H^0(\widetilde{C},\omega_{\widetilde{C}})_{(1,1)}\oplus H^0(\widetilde{C},\omega_{\widetilde{C}})_{(1,2)}\oplus H^0(\widetilde{C},\omega_{\widetilde{C}})_{(2,1)},\]
		where $H^0(\widetilde{C},\omega_{\widetilde{C}})_i$ is the eigenspace w.r.t the character $i\in \mathbb{Z}_{3}\times\mathbb{Z}_{3}$. For an abelian cover, these dimensions are computed in \cite{MZ}, Prop 2.8. We have that $\dim H^0(\widetilde{C},\omega_{\widetilde{C}})_{(1,1)}=\dim H^0(\widetilde{C},\omega_{\widetilde{C}})_{(1,2)}=H^0(\widetilde{C},\omega_{\widetilde{C}})_{(2,1)}=1$. Hence 
		\begin{equation*}
		(S^2H^0(\widetilde{C},\omega_{\widetilde{C}})_-)^{\widetilde{G}}=H^0(\widetilde{C},\omega_{\widetilde{C}})_{(1,2)}\otimes H^0(\widetilde{C},\omega_{\widetilde{C}})_{(2,1)}
		\end{equation*} 
		So $\dim P(\widetilde{G})=1$. 
		
		\item  Consider the family given by the monodromy data $(6,(1,1,1,1,2))$, i.e., the family $y^{6}=(x-t_1)(x-t_2)(x-t_3)(x-t_4)(x-t_5)^2$. This family has Galois group $\mathbb{Z}_{6}$ and fiber genus $7$. The quotient by the subgroup $\Z_3$ gives rise to a triple cover $\widetilde{C}_t\to C_t$ which is totally ramified at 6 points. The quotient curve $C_t$ corresponds to the $\Z_2$-covering $y^{2}=(x-t_1)(x-t_2)(x-t_3)(x-t_4)$, which is a curve of genus 1. Hence the Prym family is contained in $R_{6,[6]}$ and this family corresponds to the example with data $(r,\tilde g, \#)=(5,4,2)$ in the table. \\
		
		The automorphism $\delta$ of order 3 corresponds to the automorphism $y\mapsto \xi_3y$, where $\xi_3$ is a primitive 3rd root of unity. Using this action, we compute the eigenspace $H^0(\widetilde{C},\omega_{\widetilde{C}})_-$. We have that
		\[H^0(\widetilde{C},\omega_{\widetilde{C}})_-=H^0(\widetilde{C},\omega_{\widetilde{C}})_1\oplus H^0(\widetilde{C},\omega_{\widetilde{C}})_2 \oplus H^0(\widetilde{C},\omega_{\widetilde{C}})_4\oplus H^0(\widetilde{C},\omega_{\widetilde{C}})_5,\]
		where $H^0(\widetilde{C},\omega_{\widetilde{C}})_i$ is the eigenspace w.r.t the character $i\in \Z_{6}$. We have the dimensions $\dim H^0(\widetilde{C},\omega_{\widetilde{C}})_1=3$, $\dim H^0(\widetilde{C},\omega_{\widetilde{C}})_2=2$, $\dim H^0(\widetilde{C},\omega_{\widetilde{C}})_4=1$, $\dim H^0(\widetilde{C},\omega_{\widetilde{C}})_5=0$.
		The group $\widetilde{G}=\Z_6$ acts on $H^0(\widetilde{C},\omega_{\widetilde{C}})_-$ by $y\mapsto \xi_6 y$ so that we have $H^0(\widetilde{C},\omega_{\widetilde{C}})_{-,i}=H^0(\widetilde{C},\omega_{\widetilde{C}})_i$. 
		Now, we can compute $\dim P(\widetilde{G})=\dim(S^2H^0(\widetilde{C},\omega_{\widetilde{C}})_-)^{\widetilde{G}}$. Note that 
		\begin{equation*}
		(S^2H^0(\widetilde{C},\omega_{\widetilde{C}})_-)^{\widetilde{G}}=H^0(\widetilde{C},\omega_{\widetilde{C}})_2\otimes H^0(\widetilde{C},\omega_{\widetilde{C}})_4
		\end{equation*} 
		So $\dim P(\widetilde{G})=2$. This implies that the family satisfies condition (A). Since the family is two dimensional, it is not enough to conclude and we must still show that condition (B) holds. In order to do this we use the basis of the differential forms introduced earlier. It holds that
		\begin{gather*}
		H^0(\widetilde{C},\omega_{\widetilde{C}})_{-,2}=H^0(\widetilde{C},\omega_{\widetilde{C}})_2=
		\langle\alpha_1=y^2\prod_{i=1}^{5} (x-t_i)^{-1}dx, \alpha_2=x\alpha_1\rangle
		\end{gather*}
		and
		\begin{gather*}
		H^0(\widetilde{C},\omega_{\widetilde{C}})_{-,4}=
		H^0(\widetilde{C},\omega_{\widetilde{C}})_4=\langle\beta=y^4(x-t_1)^{-1}(x-t_2)^{-1}(x-t_3)^{-1}(x-t_4)^{-1}(x-t_5)^{-2}dx\rangle,
		\end{gather*}
		so that
		$(S^2H^0(\widetilde{C},\omega_{\widetilde{C}})_-)^{\widetilde{G}}=\langle\alpha_1\odot\beta,
		\alpha_2\odot\beta \rangle$. We have
		\[
		m(\alpha_1\odot\beta)=\frac{(dx)^2}{\prod_{i=1}^{5}(x-t_i)},\qquad
		m(\alpha_2\odot\beta)=\frac{x(dx)^2}{\prod_{i=1}^{5}(x-t_i)}.
		\]
		So $v=a_1(\alpha_1\odot\beta)+a_2(\alpha_2\odot\beta)\in\ker(m)$
		if and only if
		$a_1\frac{(dx)^2}{\prod_{i=1}^{5}(x-t_i)}+a_2\frac{x(dx)^2}{\prod_{i=1}^{5}(x-t_i)}=0$.
		It is straightforward to see that this holds if and only if
		$a_1=a_2=0$. This shows that $m$ is injective and by condition (A), it is an isomorphism, so condition (B) is satisfied.
		
		\item Consider the family of genus 2 curves with non-abelian Galois group $\widetilde{G}=D_4$ and ramification data $(2^3,4)$. This family corresponds to the example with data $(r,\tilde g, \#)=(4,2,3)$ in the table and does not satisfy (B1) and therefore we can not conclude by showing the isomorphy of the multiplication map. However, in this case the quotient curve $C$ is isomorphic to $\P^1$ and so by the remark after Theorem \ref{Subtorus-jacobian}, the family of Pryms $P(\widetilde{C}/C)$ is isogeneous to the family of Jacobians. A close inspection of Tables 1,2 in \cite{FGP} shows that this famiy is family (29) of that paper and hence it is a special family. 
		The same argument shows that the families of genus 3 curves with Galois group $\widetilde{G}=D_4$, ramification data $(2^5)$ and $H=\Z_2^2$ which corresponds to examples $(r,\tilde g, \#)=(5,3,1),(5,3,2)$ in the table are isogeneous to the family (32) of \cite{FGP} and so are also special 2-dimensional families (these also do not satisfy (B1)).

		\item
		An abelian example that does not verifies condition is the following family. Consider $\widetilde G=\Z_3^2$ and the monodromy matrix $A=\begin{pmatrix}1&0&1&2&2\\0&2&2&0&2\end{pmatrix}$. Then the curve $\widetilde C$ has genus 7 and equations
		\begin{gather*}
		w_1^3=(z-z_1)(z-z_3)(z-z_4)^2(z-z_5)^2\\
		w_2^3=(z-z_2)^2(z-z_3)^2(z-z_5)^2
		\end{gather*}
		Also consider the subgroup $H\cong \Z_3$ generated by the element $(0,1)^t$, that acts as $w_1\mapsto w_1, w_2\mapsto \xi_3w_2$, where $\xi_3$ is a primitive 3rd root of unity.
		We have
		$H^0(\widetilde{C},\omega_{\widetilde{C}})_-=V_{(0,2)}\oplus V_{(1,1)}\oplus V_{(2,1)}\oplus V_{(1,2)}\oplus V_{(2,2)}$,
		where all summands have dimension 1. Then  we obtain
		\[(S^2H^0(\widetilde{C},\omega_{\widetilde{C}})_-)^{\widetilde G}= (V_{(1,1)}\otimes V_{(2,2)})\oplus (V_{(1,2)}\otimes V_{(2,1)})\]
		hence condition (B1) is not satisfied.
		We have
		\begin{gather*}
		H^0(\widetilde{C},\omega_{\widetilde{C}})_{(1,1)}=\langle\omega_1=\frac{w_1w_2}{\prod_{i=1}^4(z-z_i)(z-z_5)^2}dx\rangle\\
		H^0(\widetilde{C},\omega_{\widetilde{C}})_{(2,2)}=\langle\omega_2=\frac{w_1^2w_2^2}{(z-z_1)\prod_{i=2}^4(z-z_i)^2(z-z_5)^3}dx\rangle\\
		H^0(\widetilde{C},\omega_{\widetilde{C}})_{(1,2)}=\langle\omega_3=\frac{w_1w_2^2}{(z-z_1)(z-z_2)^2(z-z_3)^2(z-z_4)(z-z_5)^2}dx\rangle\\
		H^0(\widetilde{C},\omega_{\widetilde{C}})_{(2,1)}=\langle\omega_4=\frac{w_1^2w_2}{(z-z_1)(z-z_2)(z-z_3)^2(z-z_4)^2(z-z_5)^2}dx\rangle
		\end{gather*}
		Hence we compute
		\begin{gather*}
		v_1:= m(\omega_1\odot\omega_2)=\frac{(dx)^2}{(z-z_1)(z-z_2)(z-z_4)(z-z_5)}\\
		v_2:= m(\omega_3\odot\omega_4)=\frac{(dx)^2}{(z-z_1)(z-z_2)(z-z_3)(z-z_4)}
		\end{gather*}
		and we find that $a_1v_1+a_2v_2=0$ if and only if $a_1=a_2=0$, so $m$ is injective. Together with condition (A) this implies that (B) holds.
		Thus the family gives rise to a 2-dimensional Shimura variety.
		
		\item
		Consider the 3-dimensional family with group $\widetilde G=\Z_2^3$ and monodromy matrix $A=\begin{pmatrix}0&0&1&1&0&0\\0&1&1&1&0&1\\1&1&1&1&1&1\end{pmatrix}$.
		The equations of the genus 5 curve $\widetilde C$ are
		\begin{gather*}
		w_1^2=(z-z_3)(z-z_4)\\
		w_2^2=(z-z_2)(z-z_3)(z-z_4)(z-z_6)\\
		w_3^2=(z-z_1)(z-z_2)(z-z_3)(z-z_4)(z-z_5)(z-z_6)
		\end{gather*}
		We consider the $\Z_2^2$-cover given by the action of the subgroup $H=\langle(1,0,0)^t,(0,1,0)^t\rangle$. The quotient curve $C=\widetilde C/H$ has genus 2.
		We have 
		$H^0(\widetilde{C},\omega_{\widetilde{C}})_-=H^0(\widetilde{C},\omega_{\widetilde{C}})_{(0,1,0)}\oplus H^0(\widetilde{C},\omega_{\widetilde{C}})_{(1,0,1)}\oplus H^0(\widetilde{C},\omega_{\widetilde{C}})_{(1,1,1)}$ where each summand has dimension 1.
		Then we get
		\begin{gather*}
		H^0(\widetilde{C},\omega_{\widetilde{C}})_{(0,1,0)}=\langle{\omega_1=\frac{y_2}{(z-z_2)(z-z_3)(z-z_4)(z-z_6)}dx}\rangle\\
		H^0(\widetilde{C},\omega_{\widetilde{C}})_{(1,0,1)}=\langle{\omega_2=\frac{y_1y_3}{(z-z_1)(z-z_2)(z-z_3)(z-z_4)(z-z_5)(z-z_6)}dx}\rangle\\
		H^0(\widetilde{C},\omega_{\widetilde{C}})_{(1,1,1)}=\langle{\omega_3=\frac{y_1y_2y_3}{(z-z_1)(z-z_2)(z-z_3)^2(z-z_4)^2(z-z_5)(z-z_6)}dx}\rangle
		\end{gather*}
		thus if we set
		\begin{gather*}
		v_1:= m(\omega_1\odot\omega_1)=\frac{(dx)^2}{(z-z_2)(z-z_3)(z-z_4)(z-z_6)}\\
		v_2:= m(\omega_2\odot\omega_2)=\frac{(dx)^2}{(z-z_1)(z-z_2)(z-z_5)(z-z_6)}\\
		v_3:= m(\omega_3\odot\omega_3)=\frac{(dx)^2}{(z-z_1)(z-z_3)(z-z_4)(z-z_5)}
		\end{gather*}
		we find that $a_1v_1+a_2v_2+a_3v_3=0$ if and only if $a_1(z-z_1)(z-z_5)+a_2(z-z_3)(z-z_4)+a_3(z-z_2)(z-z_6)=0$, i.e.\ $a_1=a_2=a_3=0$.
		Hence the multiplication map is an isomorphism and the family gives rise to a special subvariety.

	\end{itemize}
	
	\newpage
	
	We list all the obtained prym data that give rise to shimura varieties.
	For each example it is reported: the number $r$ of critical values on $\P^1$, the genus $\widetilde g$ of $\widetilde C$ and $g$ of $C$, the dimension $p=\widetilde g-g$ and progressive index ($\#$), the group $\widetilde G$ and the subgroup $H$ determining the prym cover, the number of ramification and branch points of this cover, the quotient group $G=\widetilde G/H$ acting on $C$. Finally the fulfilled conditions are marked.
	
	%%%%%%%%%%%%%%%%%%%  TABLE %%%%%%%%%%%%%%%%%%%%%
	
	\def\headtable{\begin{table}[H]
			\begin{tabular}{*{13}{c}}
				\toprule
				$r$&$\widetilde{g}$&$g$&$p$&$\#$&$\widetilde{G}$&$H$&Ram pt&Br pt&$G$&(B1)&(B2)&(B) \\ 
				\midrule}
			\def\tailtable{\bottomrule\end{tabular}\end{table}}
	\def\breaktable{\tailtable \newpage \headtable}
	
	\headtable
	4 & 2 & 0 & 2 & 1 &$S_3$&$C_3$& 4 & 4 &$C_2$& \phantom{X} & $\checkmark$ & $\checkmark$ \\ 
	4 & 2 & 0 & 2 & 2 &$C_6$&$C_3$& 4 & 4 &$C_2$& $\checkmark$ & $\checkmark$ & $\checkmark$ \\ 
	4 & 2 & 0 & 2 & 3 &$D_4$&$C_4$& 6 & 4 &$C_2$& \phantom{X} & $\checkmark$ & $\checkmark$ \\ 
	4 & 2 & 0 & 2 & 4,5 &$D_4$&$C_2^2$& 10 & 5 &$C_2$& \phantom{X} & $\checkmark$ & $\checkmark$ \\ 
	%4 & 2 & 0 & 2 & 5 &$D_4$&$C_2^2$& 10 & 5 &$C_2$& \phantom{X} & $\checkmark$ & $\checkmark$ \\ 
	4 & 2 & 0 & 2 & 6 &$D_6$&$C_3$& 4 & 4 &$C_2^2$& \phantom{X} & $\checkmark$ & $\checkmark$ \\ 
	4 & 2 & 0 & 2 & 7 &$D_6$&$C_6$& 10 & 4 &$C_2$& \phantom{X} & $\checkmark$ & $\checkmark$ \\ 
	4 & 2 & 0 & 2 & 8,9 &$D_6$&$S_3$& 10 & 4 &$C_2$& \phantom{X} & $\checkmark$ & $\checkmark$ \\ 
	%4 & 2 & 0 & 2 & 9 &$D_6$&$S_3$& 10 & 4 &$C_2$& \phantom{X} & $\checkmark$ & $\checkmark$ \\ 
	4 & 3 & 1 & 2 & 1 &$C_6$&$C_3$& 2 & 2 &$C_2$& $\checkmark$ & \phantom{X} & $\checkmark$ \\ 
	4 & 3 & 0 & 3 & 2 &$C_6$&$C_3$& 5 & 5 &$C_2$& $\checkmark$ & $\checkmark$ & $\checkmark$ \\ 
	4 & 3 & 0 & 3 & 3 &$C_2\times C_4$&$C_4$& 4 & 4 &$C_2$& $\checkmark$ & $\checkmark$ & $\checkmark$ \\ 
	4 & 3 & 0 & 3 & 4 &$C_2\times C_4$&$C_2^2$& 12 & 6 &$C_2$& $\checkmark$ & $\checkmark$ & $\checkmark$ \\ 
	4 & 3 & 1 & 2 & 5 &$C_2\times C_4$&$C_4$& 4 & 2 &$C_2$& $\checkmark$ & $\checkmark$ & $\checkmark$ \\ 
	4 & 3 & 0 & 3 & 6,7 &$C_2\times C_4$&$C_4$& 8 & 5 &$C_2$& $\checkmark$ & $\checkmark$ & $\checkmark$ \\ 
	%4 & 3 & 0 & 3 & 7 &$C_2\times C_4$&$C_4$& 8 & 5 &$C_2$& $\checkmark$ & $\checkmark$ & $\checkmark$ \\ 
	4 & 3 & 0 & 3 & 8 &$C_2\times C_4$&$C_2^2$& 12 & 6 &$C_2$& $\checkmark$ & $\checkmark$ & $\checkmark$ \\ 
	%%%	4 & 3 & 1 & 2 & 9 &$D_4$&$C_2^2$& 4 & 2 &$C_2$& \phantom{X} & \phantom{X} & \phantom{X} \\ 
	4 & 3 & 0 & 3 & 10 &$A_4$&$C_2^2$& 12 & 6 &$C_3$& \phantom{X} & $\checkmark$ & $\checkmark$ \\ 
	%%%	4 & 3 & 1 & 2 & 11 &$D_6$&$C_3$& 2 & 2 &$C_2^2$& \phantom{X} & \phantom{X} & \phantom{X} \\ 
	%%%	4 & 3 & 1 & 2 & 12 &$D_6$&$S_3$& 2 & 1 &$C_2$& \phantom{X} & \phantom{X} & \phantom{X} \\ 
	4 & 3 & 1 & 2 & 13-15 &$C_2\times D_4$&$C_4$& 4 & 2 &$C_2^2$& \phantom{X} & $\checkmark$ & $\checkmark$ \\ 
	%4 & 3 & 1 & 2 & 14 &$C_2\times D_4$&$C_2^2$& 4 & 2 &$C_2^2$& \phantom{X} & $\checkmark$ & $\checkmark$ \\ 
	%4 & 3 & 1 & 2 & 15 &$C_2\times D_4$&$C_2^2$& 4 & 2 &$C_2^2$& \phantom{X} & $\checkmark$ & $\checkmark$ \\ 
	4 & 3 & 1 & 2 & 16 &$C_2\times D_4$&$D_4$& 4 & 1 &$C_2$& \phantom{X} & $\checkmark$ & $\checkmark$ \\ 
	4 & 3 & 0 & 3 & 17-19 &$D_4\rtimes C_2$&$C_2^2$& 12 & 6 &$C_2^2$& $\checkmark$ & $\checkmark$ & $\checkmark$ \\ 
	%4 & 3 & 0 & 3 & 18 &$D_4\rtimes C_2$&$C_2^2$& 12 & 6 &$C_2^2$& $\checkmark$ & $\checkmark$ & $\checkmark$ \\ 
	%4 & 3 & 0 & 3 & 19 &$D_4\rtimes C_2$&$C_2^2$& 12 & 6 &$C_2^2$& $\checkmark$ & $\checkmark$ & $\checkmark$ \\ 
	4 & 3 & 0 & 3 & 20 &$D_4\rtimes C_2$&$C_4$& 4 & 4 &$C_2^2$& $\checkmark$ & $\checkmark$ & $\checkmark$ \\ 
	4 & 3 & 0 & 3 & 21-23 &$D_4\rtimes C_2$&$D_4$& 20 & 5 &$C_2$& $\checkmark$ & $\checkmark$ & $\checkmark$ \\ 
	%4 & 3 & 0 & 3 & 22 &$D_4\rtimes C_2$&$C_2\times C_4$& 12 & 4 &$C_2$& $\checkmark$ & $\checkmark$ & $\checkmark$ \\ 
	%4 & 3 & 0 & 3 & 23 &$D_4\rtimes C_2$&$D_4$& 20 & 5 &$C_2$& $\checkmark$ & $\checkmark$ & $\checkmark$ \\ 
	4 & 3 & 0 & 3 & 24-26 &$D_4\rtimes C_2$&$C_2\times C_4$& 12 & 4 &$C_2$& $\checkmark$ & $\checkmark$ & $\checkmark$ \\ 
	%4 & 3 & 0 & 3 & 25 &$D_4\rtimes C_2$&$D_4$& 20 & 5 &$C_2$& $\checkmark$ & $\checkmark$ & $\checkmark$ \\ 
	%4 & 3 & 0 & 3 & 26 &$D_4\rtimes C_2$&$C_2\times C_4$& 12 & 4 &$C_2$& $\checkmark$ & $\checkmark$ & $\checkmark$ \\ 
	4 & 3 & 0 & 3 & 27 &$S_4$&$C_2^2$& 12 & 6 &$S_3$& \phantom{X} & $\checkmark$ & $\checkmark$ \\ 
	4 & 3 & 0 & 3 & 28 &$S_4$&$A_4$& 20 & 4 &$C_2$& \phantom{X} & $\checkmark$ & $\checkmark$ \\ 
	4 & 4 & 0 & 4 & 1 &$C_6$&$C_3$& 6 & 6 &$C_2$& $\checkmark$ & $\checkmark$ & $\checkmark$ \\ 
	4 & 4 & 0 & 4 & 2-4 &$Q_8$&$C_4$& 10 & 6 &$C_2$& \phantom{X} & $\checkmark$ & $\checkmark$ \\ 
	%4 & 4 & 0 & 4 & 3 &$Q_8$&$C_4$& 10 & 6 &$C_2$& \phantom{X} & $\checkmark$ & $\checkmark$ \\ 
	%4 & 4 & 0 & 4 & 4 &$Q_8$&$C_4$& 10 & 6 &$C_2$& \phantom{X} & $\checkmark$ & $\checkmark$ \\ 
	4 & 4 & 2 & 2 & 5,6 &$C_3^2$&$C_3$& 0 & 0 &$C_3$& $\checkmark$ & $\checkmark$ & $\checkmark$ \\ 
	%4 & 4 & 2 & 2 & 6 &$C_3^2$&$C_3$& 0 & 0 &$C_3$& $\checkmark$ & $\checkmark$ & $\checkmark$ \\ 
	4 & 4 & 1 & 3 & 7,8 &$C_3^2$&$C_3$& 3 & 3 &$C_3$& $\checkmark$ & $\checkmark$ & $\checkmark$ \\ 
	%4 & 4 & 1 & 3 & 8 &$C_3^2$&$C_3$& 3 & 3 &$C_3$& $\checkmark$ & $\checkmark$ & $\checkmark$ \\ 
	4 & 4 & 0 & 4 & 9 &$C_3^2$&$C_3$& 6 & 6 &$C_3$& $\checkmark$ & $\checkmark$ & $\checkmark$ \\ 
	%%%	4 & 4 & 1 & 3 & 10 &$A_4$&$C_2^2$& 6 & 3 &$C_3$& \phantom{X} & \phantom{X} & \phantom{X} \\ 
	4 & 4 & 0 & 4 & 11 &$C_2\times C_6$&$C_3$& 6 & 6 &$C_2^2$& $\checkmark$ & $\checkmark$ & $\checkmark$ \\ 
	4 & 4 & 0 & 4 & 12 &$C_2\times C_6$&$C_2^2$& 14 & 7 &$C_3$& $\checkmark$ & $\checkmark$ & $\checkmark$ \\ 
	4 & 4 & 0 & 4 & 13,15 &$C_2\times C_6$&$C_6$& 12 & 5 &$C_2$& $\checkmark$ & $\checkmark$ & $\checkmark$ \\ 
	4 & 4 & 0 & 4 & 14 &$C_2\times C_6$&$C_6$& 6 & 4 &$C_2$& $\checkmark$ & $\checkmark$ & $\checkmark$ \\ 
	%4 & 4 & 0 & 4 & 15 &$C_2\times C_6$&$C_6$& 12 & 5 &$C_2$& $\checkmark$ & $\checkmark$ & $\checkmark$ \\ 
	4 & 4 & 2 & 2 & 16 &$C_3\times S_3$&$C_3$& 0 & 0 &$S_3$& $\checkmark$ & $\checkmark$ & $\checkmark$ \\ 
	4 & 4 & 2 & 2 & 17 &$C_3\times S_3$&$C_3$& 0 & 0 &$C_6$& \phantom{X} & $\checkmark$ & $\checkmark$ \\ 
	4 & 4 & 0 & 4 & 18 &$C_3\times S_3$&$C_3$& 6 & 6 &$S_3$& $\checkmark$ & $\checkmark$ & $\checkmark$ \\ 
	4 & 4 & 0 & 4 & 19 &$C_3\times S_3$&$S_3$& 18 & 6 &$C_3$& $\checkmark$ & $\checkmark$ & $\checkmark$ \\ 
	4 & 4 & 0 & 4 & 20 &$C_3\times S_3$&$C_3^2$& 12 & 4 &$C_2$& $\checkmark$ & $\checkmark$ & $\checkmark$ \\ 
	4 & 4 & 2 & 2 & 21,22 &$C_3\rtimes S_3$&$C_3$& 0 & 0 &$S_3$& \phantom{X} & $\checkmark$ & $\checkmark$ \\ 
	%4 & 4 & 2 & 2 & 22 &$C_3\rtimes S_3$&$C_3$& 0 & 0 &$S_3$& \phantom{X} & $\checkmark$ & $\checkmark$ \\ 
	%%%	4 & 4 & 1 & 3 & 23 &$S_4$&$C_2^2$& 6 & 3 &$S_3$& \phantom{X} & \phantom{X} & \phantom{X} \\ 
	%%%	4 & 4 & 1 & 3 & 24 &$S_4$&$A_4$& 6 & 1 &$C_2$& \phantom{X} & \phantom{X} & \phantom{X} \\ 
	4 & 4 & 2 & 2 & 25,26 &$S_3^2$&$C_3$& 0 & 0 &$D_6$& \phantom{X} & $\checkmark$ & $\checkmark$ \\ 
	%4 & 4 & 2 & 2 & 26 &$S_3^2$&$C_3$& 0 & 0 &$D_6$& \phantom{X} & $\checkmark$ & $\checkmark$ \\ 
	
	\breaktable
	
	4 & 5 & 0 & 5 & 1 &$C_8$&$C_4$& 8 & 6 &$C_2$& $\checkmark$ & $\checkmark$ & $\checkmark$ \\ 
	4 & 5 & 1 & 4 & 2 &$C_2\times C_4$&$C_2^2$& 8 & 4 &$C_2$& $\checkmark$ & \phantom{X} & $\checkmark$ \\ 
	%%%	4 & 5 & 1 & 4 & 3 &$Q_8$&$C_4$& 8 & 4 &$C_2$& \phantom{X} & \phantom{X} & \phantom{X} \\ 
	%%%	4 & 5 & 1 & 4 & 4 &$C_3\rtimes C_4$&$C_3$& 4 & 4 &$C_4$& \phantom{X} & \phantom{X} & \phantom{X} \\ 
	4 & 5 & 0 & 5 & 5 &$C_3\rtimes C_4$&$C_6$& 16 & 6 &$C_2$& \phantom{X} & $\checkmark$ & $\checkmark$ \\ 
	%%%	4 & 5 & 2 & 3 & 6,7 &$A_4$&$C_2^2$& 0 & 0 &$C_3$& \phantom{X} & \phantom{X} & \phantom{X} \\ 
	%%%	%4 & 5 & 2 & 3 & 7 &$A_4$&$C_2^2$& 0 & 0 &$C_3$& \phantom{X} & \phantom{X} & \phantom{X} \\ 
	4 & 5 & 1 & 4 & 8 &$C_2\times C_6$&$C_3$& 4 & 4 &$C_2^2$& $\checkmark$ & \phantom{X} & $\checkmark$ \\ 
	4 & 5 & 1 & 4 & 9 &$C_2\times C_6$&$C_6$& 4 & 2 &$C_2$& $\checkmark$ & \phantom{X} & $\checkmark$ \\ 
	%%%	4 & 5 & 1 & 4 & 10 &$C_2^2\rtimes C_4$&$C_2^2$& 8 & 4 &$C_2^2$& \phantom{X} & \phantom{X} & \phantom{X} \\ 
	%%%	4 & 5 & 1 & 4 & 11 &$C_2^2\rtimes C_4$&$C_2\times C_4$& 8 & 2 &$C_2$& \phantom{X} & \phantom{X} & \phantom{X} \\ 
	4 & 5 & 2 & 3 & 12 &$C_2^2\rtimes C_4$&$C_2^2$& 0 & 0 &$C_4$& \phantom{X} & $\checkmark$ & $\checkmark$ \\ 
	%%%	4 & 5 & 1 & 4 & 13 &$Q_8\rtimes C_2$&$C_4$& 8 & 4 &$C_2^2$& \phantom{X} & \phantom{X} & \phantom{X} \\ 
	%%%	4 & 5 & 1 & 4 & 14 &$Q_8\rtimes C_2$&$C_8$& 8 & 2 &$C_2$& \phantom{X} & \phantom{X} & \phantom{X} \\ 
	4 & 5 & 2 & 3 & 15 &$C_2^2\times C_4$&$C_2^2$& 0 & 0 &$C_4$& $\checkmark$ & $\checkmark$ & $\checkmark$ \\ 
	4 & 5 & 1 & 4 & 16,18 &$C_2^2\times C_4$&$C_4$& 8 & 4 &$C_2^2$& $\checkmark$ & $\checkmark$ & $\checkmark$ \\ 
	4 & 5 & 1 & 4 & 17 &$C_2^2\times C_4$&$C_2^2$& 8 & 4 &$C_2^2$& $\checkmark$ & $\checkmark$ & $\checkmark$ \\ 
	%4 & 5 & 1 & 4 & 18 &$C_2^2\times C_4$&$C_4$& 8 & 4 &$C_2^2$& $\checkmark$ & $\checkmark$ & $\checkmark$ \\ 
	4 & 5 & 1 & 4 & 19 &$C_2^2\times C_4$&$C_2\times C_4$& 8 & 2 &$C_2$& $\checkmark$ & $\checkmark$ & $\checkmark$ \\ 
	%%%	4 & 5 & 1 & 4 & 20 &$D_4\rtimes C_2$&$C_2^2$& 8 & 4 &$C_2^2$& \phantom{X} & \phantom{X} & \phantom{X} \\ 
	%%%	4 & 5 & 1 & 4 & 21,22 &$D_4\rtimes C_2$&$C_4$& 8 & 4 &$C_2^2$& \phantom{X} & \phantom{X} & \phantom{X} \\ 
	%%%	%4 & 5 & 1 & 4 & 22 &$D_4\rtimes C_2$&$C_4$& 8 & 4 &$C_2^2$& \phantom{X} & \phantom{X} & \phantom{X} \\ 
	%%%	4 & 5 & 1 & 4 & 23 &$D_4\rtimes C_2$&$C_2\times C_4$& 8 & 2 &$C_2$& \phantom{X} & \phantom{X} & \phantom{X} \\ 
	%%%	4 & 5 & 1 & 4 & 24 &$C_3\rtimes D_4$&$C_3$& 4 & 4 &$D_4$& \phantom{X} & \phantom{X} & \phantom{X} \\ 
	%%%	4 & 5 & 1 & 4 & 25 &$C_3\rtimes D_4$&$C_6$& 4 & 2 &$C_2^2$& \phantom{X} & \phantom{X} & \phantom{X} \\ 
	%%%	4 & 5 & 1 & 4 & 26 &$C_3\rtimes D_4$&$C_3\rtimes C_4$& 4 & 1 &$C_2$& \phantom{X} & \phantom{X} & \phantom{X} \\ 
	%%%	4 & 5 & 2 & 3 & 27 &$S_4$&$C_2^2$& 0 & 0 &$S_3$& \phantom{X} & \phantom{X} & \phantom{X} \\ 
	%%%	4 & 5 & 2 & 3 & 28 &$C_2\times A_4$&$C_2^2$& 0 & 0 &$C_6$& \phantom{X} & \phantom{X} & \phantom{X} \\ 
	4 & 5 & 2 & 3 & 29 &$C_2\times A_4$&$C_2^2$& 0 & 0 &$C_6$& \phantom{X} & $\checkmark$ & $\checkmark$ \\ 
	4 & 5 & 2 & 3 & 30 &$C_2^2\rtimes D_4$&$C_2^2$& 0 & 0 &$D_4$& $\checkmark$ & $\checkmark$ & $\checkmark$ \\ 
	4 & 5 & 1 & 4 & 31 &$C_2^2\rtimes D_4$&$C_2^2$& 8 & 4 &$C_2^3$& \phantom{X} & $\checkmark$ & $\checkmark$ \\ 
	4 & 5 & 1 & 4 & 32,33 &$C_2^2\rtimes D_4$&$C_4$& 8 & 4 &$D_4$& \phantom{X} & $\checkmark$ & $\checkmark$ \\ 
	%4 & 5 & 1 & 4 & 33 &$C_2^2\rtimes D_4$&$C_4$& 8 & 4 &$D_4$& \phantom{X} & $\checkmark$ & $\checkmark$ \\ 
	4 & 5 & 1 & 4 & 34-36 &$C_2^2\rtimes D_4$&$C_2\times C_4$& 8 & 2 &$C_2^2$& \phantom{X} & $\checkmark$ & $\checkmark$ \\ 
	%4 & 5 & 1 & 4 & 35 &$C_2^2\rtimes D_4$&$C_2\times C_4$& 8 & 2 &$C_2^2$& \phantom{X} & $\checkmark$ & $\checkmark$ \\ 
	%4 & 5 & 1 & 4 & 36 &$C_2^2\rtimes D_4$&$C_2\times C_4$& 8 & 2 &$C_2^2$& \phantom{X} & $\checkmark$ & $\checkmark$ \\ 
	4 & 5 & 1 & 4 & 37 &$C_2^2\rtimes D_4$&$C_4\rtimes C_4$& 8 & 1 &$C_2$& \phantom{X} & $\checkmark$ & $\checkmark$ \\ 
	%%%	4 & 5 & 1 & 4 & 38 &$C_8\rtimes C_2^2$&$C_4$& 8 & 4 &$D_4$& \phantom{X} & \phantom{X} & \phantom{X} \\ 
	%%%	4 & 5 & 1 & 4 & 39 &$C_8\rtimes C_2^2$&$C_4$& 8 & 4 &$C_2^3$& \phantom{X} & \phantom{X} & \phantom{X} \\ 
	%%%	4 & 5 & 1 & 4 & 40 &$C_8\rtimes C_2^2$&$C_2^2$& 8 & 4 &$D_4$& \phantom{X} & \phantom{X} & \phantom{X} \\ 
	%%%	4 & 5 & 1 & 4 & 41,42 &$C_8\rtimes C_2^2$&$C_8$& 8 & 2 &$C_2^2$& \phantom{X} & \phantom{X} & \phantom{X} \\ 
	%%%	%4 & 5 & 1 & 4 & 42 &$C_8\rtimes C_2^2$&$C_8$& 8 & 2 &$C_2^2$& \phantom{X} & \phantom{X} & \phantom{X} \\ 
	%%%	4 & 5 & 1 & 4 & 43 &$C_8\rtimes C_2^2$&$C_2\times C_4$& 8 & 2 &$C_2^2$& \phantom{X} & \phantom{X} & \phantom{X} \\ 
	%%%	4 & 5 & 1 & 4 & 44 &$C_8\rtimes C_2^2$&$C_4.C_4$& 8 & 1 &$C_2$& \phantom{X} & \phantom{X} & \phantom{X} \\ 
	4 & 5 & 2 & 3 & 45 &$C_2\times S_4$&$C_2^2$& 0 & 0 &$D_6$& \phantom{X} & $\checkmark$ & $\checkmark$ \\ 
	4 & 6 & 0 & 6 & 1 &$C_{10}$&$C_5$& 5 & 5 &$C_2$& $\checkmark$ & $\checkmark$ & $\checkmark$ \\ 
	%%%	4 & 6 & 2 & 4 & 2 &$C_3\rtimes C_4$&$C_3$& 2 & 2 &$C_4$& \phantom{X} & \phantom{X} & \phantom{X} \\ 
	%%%	4 & 6 & 2 & 4 & 3 &$F_5$&$C_5$& 0 & 0 &$C_4$& \phantom{X} & \phantom{X} & \phantom{X} \\ 
	4 & 7 & 1 & 6 & 1,2 &$C_8$&$C_4$& 4 & 4 &$C_2$& $\checkmark$ & \phantom{X} & $\checkmark$ \\ 
	%4 & 7 & 1 & 6 & 2 &$C_8$&$C_4$& 4 & 4 &$C_2$& $\checkmark$ & \phantom{X} & $\checkmark$ \\ 
	4 & 7 & 1 & 6 & 3 &$C_9$&$C_3$& 6 & 6 &$C_3$& $\checkmark$ & \phantom{X} & $\checkmark$ \\ 
	4 & 7 & 1 & 6 & 4 &$C_{10}$&$C_5$& 3 & 3 &$C_2$& $\checkmark$ & \phantom{X} & $\checkmark$ \\ 
	%%%	4 & 7 & 3 & 4 & 5 &$C_3\rtimes C_4$&$C_3$& 0 & 0 &$C_4$& \phantom{X} & \phantom{X} & \phantom{X} \\ 
	%%%	4 & 7 & 1 & 6 & 6 &$C_3\rtimes C_4$&$C_6$& 12 & 4 &$C_2$& \phantom{X} & \phantom{X} & \phantom{X} \\ 
	%%%	4 & 7 & 3 & 4 & 7 &$C_3\rtimes C_4$&$C_3$& 0 & 0 &$C_4$& \phantom{X} & \phantom{X} & \phantom{X} \\ 
	4 & 7 & 1 & 6 & 8 &$C_{12}$&$C_3$& 6 & 6 &$C_4$& $\checkmark$ & \phantom{X} & $\checkmark$ \\ 
	4 & 7 & 1 & 6 & 9 &$C_{12}$&$C_4$& 8 & 5 &$C_3$& $\checkmark$ & \phantom{X} & $\checkmark$ \\ 
	4 & 7 & 0 & 7 & 10 &$C_{12}$&$C_6$& 12 & 6 &$C_2$& $\checkmark$ & $\checkmark$ & $\checkmark$ \\ 
	4 & 7 & 2 & 5 & 11 &$C_{12}$&$C_3$& 3 & 3 &$C_4$& $\checkmark$ & \phantom{X} & $\checkmark$ \\ 
	4 & 7 & 1 & 6 & 12 &$C_2\times C_6$&$C_3$& 6 & 6 &$C_2^2$& $\checkmark$ & $\checkmark$ & $\checkmark$ \\ 
	4 & 7 & 1 & 6 & 13 &$C_2\times C_6$&$C_6$& 6 & 3 &$C_2$& $\checkmark$ & $\checkmark$ & $\checkmark$ \\ 
	4 & 7 & 2 & 5 & 14,15 &$C_4^2$&$C_4$& 4 & 2 &$C_4$& $\checkmark$ & $\checkmark$ & $\checkmark$ \\ 
	%4 & 7 & 2 & 5 & 15 &$C_4^2$&$C_4$& 4 & 2 &$C_4$& $\checkmark$ & $\checkmark$ & $\checkmark$ \\ 
	4 & 7 & 2 & 5 & 16,17 &$C_4\rtimes C_4$&$C_4$& 4 & 2 &$C_4$& \phantom{X} & $\checkmark$ & $\checkmark$ \\ 
	%4 & 7 & 2 & 5 & 17 &$C_4\rtimes C_4$&$C_4$& 4 & 2 &$C_4$& \phantom{X} & $\checkmark$ & $\checkmark$ \\ 
	4 & 7 & 1 & 6 & 18 &$C_2\times C_8$&$C_4$& 4 & 4 &$C_2^2$& $\checkmark$ & $\checkmark$ & $\checkmark$ \\ 
	4 & 7 & 2 & 5 & 19 &$C_2\times C_8$&$C_4$& 4 & 2 &$C_4$& $\checkmark$ & $\checkmark$ & $\checkmark$ \\ 
	4 & 7 & 1 & 6 & 20 &$C_2\times C_8$&$C_8$& 4 & 2 &$C_2$& $\checkmark$ & $\checkmark$ & $\checkmark$ \\ 
	%%%	4 & 7 & 1 & 6 & 21 &$Q_8\rtimes C_2$&$C_4$& 4 & 4 &$C_2^2$& \phantom{X} & \phantom{X} & \phantom{X} \\ 
	%%%	4 & 7 & 1 & 6 & 22 &$Q_8\rtimes C_2$&$Q_8$& 4 & 2 &$C_2$& \phantom{X} & \phantom{X} & \phantom{X} \\ 
	4 & 7 & 1 & 6 & 23-25 &$C_2\times Q_8$&$C_4$& 12 & 6 &$C_2^2$& \phantom{X} & $\checkmark$ & $\checkmark$ \\ 
	%4 & 7 & 1 & 6 & 24 &$C_2\times Q_8$&$C_4$& 12 & 6 &$C_2^2$& \phantom{X} & $\checkmark$ & $\checkmark$ \\ 
	%4 & 7 & 1 & 6 & 25 &$C_2\times Q_8$&$C_4$& 12 & 6 &$C_2^2$& \phantom{X} & $\checkmark$ & $\checkmark$ \\ 
	4 & 7 & 1 & 6 & 26 &$C_2\times Q_8$&$Q_8$& 12 & 3 &$C_2$& \phantom{X} & $\checkmark$ & $\checkmark$ \\ 
	%%%	4 & 7 & 3 & 4 & 27,28 &$C_3\times S_3$&$C_3$& 0 & 0 &$C_6$& \phantom{X} & \phantom{X} & \phantom{X} \\ 
	%%%	%4 & 7 & 3 & 4 & 28 &$C_3\times S_3$&$C_3$& 0 & 0 &$C_6$& \phantom{X} & \phantom{X} & \phantom{X} \\ 
	4 & 7 & 2 & 5 & 29 &$C_3\times S_3$&$C_3$& 3 & 3 &$S_3$& $\checkmark$ & $\checkmark$ & $\checkmark$ \\ 
	4 & 7 & 3 & 4 & 30 &$C_3\times S_3$&$C_3$& 0 & 0 &$C_6$& \phantom{X} & $\checkmark$ & $\checkmark$ \\ 
	4 & 7 & 2 & 5 & 31 &$C_3\times S_3$&$C_3$& 3 & 3 &$S_3$& \phantom{X} & \phantom{X} & \phantom{X} \\ 
	4 & 7 & 2 & 5 & 32 &$C_3\times C_6$&$C_3$& 3 & 3 &$C_6$& $\checkmark$ & $\checkmark$ & $\checkmark$ \\ 
	4 & 7 & 3 & 4 & 33 &$C_3\times C_6$&$C_3$& 0 & 0 &$C_6$& $\checkmark$ & $\checkmark$ & $\checkmark$ \\ 
	%%%	4 & 7 & 1 & 6 & 34 &$Q_8\rtimes C_3$&$Q_8$& 12 & 3 &$C_3$& \phantom{X} & \phantom{X} & \phantom{X} \\ 
	%%%	4 & 7 & 3 & 4 & 35 &$C_4\times S_3$&$C_3$& 0 & 0 &$C_2\times C_4$& \phantom{X} & \phantom{X} & \phantom{X} \\ 
	%%%	4 & 7 & 1 & 6 & 36 &$C_4\times S_3$&$C_4$& 12 & 6 &$S_3$& \phantom{X} & \phantom{X} & \phantom{X} \\ 
	%%%	4 & 7 & 1 & 6 & 37 &$C_4\times S_3$&$C_6$& 12 & 4 &$C_2^2$& \phantom{X} & \phantom{X} & \phantom{X} \\ 
	%%%	4 & 7 & 1 & 6 & 38 &$C_4\times S_3$&$C_{12}$& 12 & 2 &$C_2$& \phantom{X} & \phantom{X} & \phantom{X} \\ 
	%%%	4 & 7 & 3 & 4 & 39 &$C_4\times S_3$&$C_3$& 0 & 0 &$C_2\times C_4$& \phantom{X} & \phantom{X} & \phantom{X} \\ 
	%%%	4 & 7 & 3 & 4 & 40 &$C_3\rtimes D_4$&$C_3$& 0 & 0 &$D_4$& \phantom{X} & \phantom{X} & \phantom{X} \\ 
	4 & 7 & 1 & 6 & 41 &$D_8\rtimes C_2$&$C_4$& 4 & 4 &$C_2^3$& \phantom{X} & $\checkmark$ & $\checkmark$ \\ 
	4 & 7 & 2 & 5 & 42 &$D_8\rtimes C_2$&$C_4$& 4 & 2 &$D_4$& $\checkmark$ & $\checkmark$ & $\checkmark$ \\ 
	4 & 7 & 1 & 6 & 43-45 &$D_8\rtimes C_2$&$C_8$& 4 & 2 &$C_2^2$& \phantom{X} & $\checkmark$ & $\checkmark$ \\ 
	%4 & 7 & 1 & 6 & 44 &$D_8\rtimes C_2$&$Q_8$& 4 & 2 &$C_2^2$& \phantom{X} & $\checkmark$ & $\checkmark$ \\ 
	%4 & 7 & 1 & 6 & 45 &$D_8\rtimes C_2$&$Q_8$& 4 & 2 &$C_2^2$& \phantom{X} & $\checkmark$ & $\checkmark$ \\ 
	4 & 7 & 1 & 6 & 46 &$D_8\rtimes C_2$&$Q_{16}$& 4 & 1 &$C_2$& \phantom{X} & $\checkmark$ & $\checkmark$ \\ 
	%%%	4 & 7 & 3 & 4 & 47 &$S_3^2$&$C_3$& 0 & 0 &$D_6$& \phantom{X} & \phantom{X} & \phantom{X} \\ 
	%%%	4 & 7 & 3 & 4 & 48 &$S_3\times D_4$&$C_3$& 0 & 0 &$C_2\times D_4$& \phantom{X} & \phantom{X} & \phantom{X} \\ 
	%%%	4 & 7 & 3 & 4 & 49 &$Q_8\rtimes S_3$&$C_3$& 0 & 0 &$D_4\rtimes C_2$& \phantom{X} & \phantom{X} & \phantom{X} \\ 
	%%%	4 & 7 & 1 & 6 & 50-52 &$Q_8\rtimes S_3$&$C_4$& 12 & 6 &$D_6$& \phantom{X} & \phantom{X} & \phantom{X} \\ 
	%%%	%4 & 7 & 1 & 6 & 51 &$Q_8\rtimes S_3$&$C_4$& 12 & 6 &$D_6$& \phantom{X} & \phantom{X} & \phantom{X} \\ 
	%%%	%4 & 7 & 1 & 6 & 52 &$Q_8\rtimes S_3$&$C_4$& 12 & 6 &$D_6$& \phantom{X} & \phantom{X} & \phantom{X} \\ 
	%%%	4 & 7 & 1 & 6 & 53 &$Q_8\rtimes S_3$&$C_6$& 12 & 4 &$C_2^3$& \phantom{X} & \phantom{X} & \phantom{X} \\ 
	%%%	4 & 7 & 1 & 6 & 54 &$Q_8\rtimes S_3$&$Q_8$& 12 & 3 &$S_3$& \phantom{X} & \phantom{X} & \phantom{X} \\ 
	%%%	4 & 7 & 1 & 6 & 55-57 &$Q_8\rtimes S_3$&$C_{12}$& 12 & 2 &$C_2^2$& \phantom{X} & \phantom{X} & \phantom{X} \\ 
	%%%	%4 & 7 & 1 & 6 & 56 &$Q_8\rtimes S_3$&$C_{12}$& 12 & 2 &$C_2^2$& \phantom{X} & \phantom{X} & \phantom{X} \\ 
	%%%	%4 & 7 & 1 & 6 & 57 &$Q_8\rtimes S_3$&$C_{12}$& 12 & 2 &$C_2^2$& \phantom{X} & \phantom{X} & \phantom{X} \\ 
	%%%	4 & 7 & 1 & 6 & 58 &$Q_8\rtimes S_3$&$C_3\times Q_8$& 12 & 1 &$C_2$& \phantom{X} & \phantom{X} & \phantom{X} \\ 
	
	\breaktable
	
	4 & 8 & 2 & 6 & 1 &$C_9$&$C_3$& 4 & 4 &$C_3$& $\checkmark$ & \phantom{X} & $\checkmark$ \\ 
	%%%	4 & 8 & 2 & 6 & 2 &$C_7\rtimes C_3$&$C_7$& 0 & 0 &$C_3$& \phantom{X} & \phantom{X} & \phantom{X} \\ 
	%%%	4 & 8 & 2 & 6 & 3 &$C_3\times D_4$&$C_4$& 2 & 2 &$C_6$& \phantom{X} & \phantom{X} & \phantom{X} \\ 
	%%%	4 & 8 & 2 & 6 & 4 &$F_7$&$C_7$& 0 & 0 &$C_6$& \phantom{X} & \phantom{X} & \phantom{X} \\ 
	4 & 9 & 1 & 8 & 1 &$C_{10}$&$C_5$& 4 & 4 &$C_2$& $\checkmark$ & $\checkmark$ & $\checkmark$ \\ 
	4 & 9 & 2 & 7 & 2 &$C_{12}$&$C_4$& 4 & 3 &$C_3$& $\checkmark$ & \phantom{X} & $\checkmark$ \\ 
	4 & 9 & 3 & 6 & 3 &$C_{12}$&$C_3$& 2 & 2 &$C_4$& $\checkmark$ & \phantom{X} & $\checkmark$ \\ 
	4 & 9 & 3 & 6 & 4,5 &$C_4^2$&$C_4$& 0 & 0 &$C_4$& $\checkmark$ & \phantom{X} & $\checkmark$ \\ 
	%4 & 9 & 3 & 6 & 5 &$C_4^2$&$C_4$& 0 & 0 &$C_4$& $\checkmark$ & \phantom{X} & $\checkmark$ \\ 
	4 & 9 & 3 & 6 & 6 &$C_4^2$&$C_4$& 0 & 0 &$C_4$& $\checkmark$ & $\checkmark$ & $\checkmark$ \\ 
	%%%	4 & 9 & 3 & 6 & 7 &$C_4\rtimes C_4$&$C_4$& 0 & 0 &$C_4$& \phantom{X} & \phantom{X} & \phantom{X} \\ 
	4 & 9 & 1 & 8 & 8 &$C_2\times C_8$&$C_4$& 8 & 6 &$C_2^2$& $\checkmark$ & $\checkmark$ & $\checkmark$ \\ 
	4 & 9 & 1 & 8 & 9 &$C_2\times C_8$&$C_8$& 8 & 3 &$C_2$& $\checkmark$ & $\checkmark$ & $\checkmark$ \\ 
	4 & 9 & 2 & 7 & 10 &$C_2\times C_8$&$C_4$& 8 & 4 &$C_4$& $\checkmark$ & $\checkmark$ & $\checkmark$ \\ 
	%%%	4 & 9 & 3 & 6 & 11 &$C_4\times S_3$&$C_3$& 2 & 2 &$C_2\times C_4$& \phantom{X} & \phantom{X} & \phantom{X} \\ 
	%%%	4 & 9 & 3 & 6 & 12 &$C_3\times D_4$&$C_4$& 0 & 0 &$C_6$& \phantom{X} & \phantom{X} & \phantom{X} \\ 
	4 & 9 & 3 & 6 & 13 &$C_2^2\times C_6$&$C_2^2$& 0 & 0 &$C_6$& $\checkmark$ & $\checkmark$ & $\checkmark$ \\ 
	%%%	4 & 9 & 3 & 6 & 14 &$D_4\rtimes C_4$&$C_4$& 0 & 0 &$D_4$& \phantom{X} & \phantom{X} & \phantom{X} \\ 
	%%%	4 & 9 & 3 & 6 & 15 &$C_4\wr C_2$&$C_4$& 0 & 0 &$C_2\times C_4$& \phantom{X} & \phantom{X} & \phantom{X} \\ 
	4 & 9 & 3 & 6 & 16 &$C_4\wr C_2$&$C_4$& 0 & 0 &$D_4$& $\checkmark$ & $\checkmark$ & $\checkmark$ \\ 
	%%%	4 & 9 & 3 & 6 & 17 &$C_4\times D_4$&$C_4$& 0 & 0 &$C_2\times C_4$& \phantom{X} & \phantom{X} & \phantom{X} \\ 
	4 & 9 & 3 & 6 & 18 &$C_4\times D_4$&$C_4$& 0 & 0 &$C_2\times C_4$& \phantom{X} & $\checkmark$ & $\checkmark$ \\ 
	%%%	4 & 9 & 3 & 6 & 19,20 &$C_4.D_4$&$C_4$& 0 & 0 &$D_4$& \phantom{X} & \phantom{X} & \phantom{X} \\ 
	%%%	%4 & 9 & 3 & 6 & 20 &$C_4.D_4$&$C_4$& 0 & 0 &$D_4$& \phantom{X} & \phantom{X} & \phantom{X} \\ 
	%%%	4 & 9 & 3 & 6 & 21 &$D_4\rtimes S_3$&$C_4$& 0 & 0 &$D_6$& \phantom{X} & \phantom{X} & \phantom{X} \\ 
	4 & 9 & 3 & 6 & 22 &$C_2\times C_3\rtimes D_4$&$C_2^2$& 0 & 0 &$D_6$& \phantom{X} & $\checkmark$ & $\checkmark$ \\ 
	4 & 9 & 3 & 6 & 23,24 &$D_4.D_4$&$C_4$& 0 & 0 &$C_2\times D_4$& \phantom{X} & $\checkmark$ & $\checkmark$ \\ 
	%4 & 9 & 3 & 6 & 24 &$D_4.D_4$&$C_4$& 0 & 0 &$C_2\times D_4$& \phantom{X} & $\checkmark$ & $\checkmark$ \\ 
	%%%	4 & 9 & 3 & 6 & 25 &$C_4\rtimes D_8$&$C_4$& 0 & 0 &$C_2\times D_4$& \phantom{X} & \phantom{X} & \phantom{X} \\ 
	4 & 10 & 3 & 7 & 1,3 &$C_{12}$&$C_3$& 3 & 3 &$C_4$& $\checkmark$ & \phantom{X} & $\checkmark$ \\ 
	4 & 10 & 1 & 9 & 2 &$C_{12}$&$C_6$& 6 & 4 &$C_2$& $\checkmark$ & \phantom{X} & $\checkmark$ \\ 
	%4 & 10 & 3 & 7 & 3 &$C_{12}$&$C_3$& 3 & 3 &$C_4$& $\checkmark$ & \phantom{X} & $\checkmark$ \\ 
	4 & 10 & 2 & 8 & 4 &$C_{12}$&$C_3$& 6 & 6 &$C_4$& $\checkmark$ & $\checkmark$ & $\checkmark$ \\ 
	4 & 10 & 2 & 8 & 5 &$C_{12}$&$C_4$& 6 & 4 &$C_3$& $\checkmark$ & $\checkmark$ & $\checkmark$ \\ 
	4 & 10 & 1 & 9 & 6,7 &$C_{14}$&$C_7$& 3 & 3 &$C_2$& $\checkmark$ & \phantom{X} & $\checkmark$ \\ 
	%4 & 10 & 1 & 9 & 7 &$C_{14}$&$C_7$& 3 & 3 &$C_2$& $\checkmark$ & \phantom{X} & $\checkmark$ \\ 
	4 & 10 & 4 & 6 & 8 &$C_3\times C_6$&$C_3$& 0 & 0 &$C_6$& $\checkmark$ & $\checkmark$ & $\checkmark$ \\ 
	4 & 10 & 2 & 8 & 9 &$C_3\times C_6$&$C_3$& 6 & 6 &$C_6$& $\checkmark$ & $\checkmark$ & $\checkmark$ \\ 
	4 & 10 & 2 & 8 & 10,11 &$C_3\times C_6$&$C_6$& 6 & 2 &$C_3$& $\checkmark$ & $\checkmark$ & $\checkmark$ \\ 
	%4 & 10 & 4 & 6 & 11 &$C_3\times C_6$&$C_3$& 0 & 0 &$C_6$& $\checkmark$ & $\checkmark$ & $\checkmark$ \\ 
	4 & 10 & 4 & 6 & 12 &$C_3\times C_6$&$C_3$& 0 & 0 &$C_6$& $\checkmark$ & $\checkmark$ & $\checkmark$ \\ 
	4 & 10 & 2 & 8 & 13 &$C_3\times D_4$&$C_3$& 6 & 6 &$D_4$& $\checkmark$ & $\checkmark$ & $\checkmark$ \\ 
	4 & 10 & 2 & 8 & 14 &$C_3\times D_4$&$C_4$& 6 & 4 &$C_6$& \phantom{X} & $\checkmark$ & $\checkmark$ \\ 
	%%%	4 & 10 & 4 & 6 & 15,16 &$C_3^2\rtimes C_3$&$C_3$& 0 & 0 &$C_3^2$& \phantom{X} & \phantom{X} & \phantom{X} \\ 
	%%%	%4 & 10 & 4 & 6 & 16 &$C_3^2\rtimes C_3$&$C_3$& 0 & 0 &$C_3^2$& \phantom{X} & \phantom{X} & \phantom{X} \\ 
	%%%	4 & 10 & 2 & 8 & 17 &$C_3^2\rtimes C_3$&$C_3^2$& 0 & 0 &$C_3$& \phantom{X} & \phantom{X} & \phantom{X} \\ 
	4 & 10 & 4 & 6 & 18-20 &$C_3^3$&$C_3$& 0 & 0 &$C_3^2$& $\checkmark$ & $\checkmark$ & $\checkmark$ \\ 
	%4 & 10 & 4 & 6 & 19 &$C_3^3$&$C_3$& 0 & 0 &$C_3^2$& $\checkmark$ & $\checkmark$ & $\checkmark$ \\ 
	%4 & 10 & 4 & 6 & 20 &$C_3^3$&$C_3$& 0 & 0 &$C_3^2$& $\checkmark$ & $\checkmark$ & $\checkmark$ \\ 
	%%%	4 & 10 & 2 & 8 & 21 &$C_3\times D_5$&$C_5$& 2 & 2 &$C_6$& \phantom{X} & \phantom{X} & \phantom{X} \\ 
	%%%	4 & 10 & 4 & 6 & 22 &$C_3\times A_4$&$C_3$& 0 & 0 &$A_4$& \phantom{X} & \phantom{X} & \phantom{X} \\ 
	%%%	4 & 10 & 4 & 6 & 23 &$C_6\times S_3$&$C_3$& 0 & 0 &$C_2\times C_6$& \phantom{X} & \phantom{X} & \phantom{X} \\ 
	%%%	4 & 10 & 2 & 8 & 24 &$C_6\times S_3$&$C_6$& 6 & 2 &$C_6$& \phantom{X} & \phantom{X} & \phantom{X} \\ 
	4 & 10 & 2 & 8 & 25 &$C_6\times S_3$&$C_3$& 6 & 6 &$D_6$& $\checkmark$ & $\checkmark$ & $\checkmark$ \\ 
	4 & 10 & 4 & 6 & 26 &$C_6\times S_3$&$C_3$& 0 & 0 &$C_2\times C_6$& \phantom{X} & $\checkmark$ & $\checkmark$ \\ 
	4 & 10 & 2 & 8 & 27 &$C_6\times S_3$&$C_6$& 6 & 2 &$C_6$& \phantom{X} & $\checkmark$ & $\checkmark$ \\ 
	4 & 10 & 4 & 6 & 28 &$C_6\times S_3$&$C_3$& 0 & 0 &$D_6$& $\checkmark$ & $\checkmark$ & $\checkmark$ \\ 
	%%%	4 & 10 & 4 & 6 & 29,31 &$C_3^2\rtimes C_6$&$C_3$& 0 & 0 &$C_3\times S_3$& \phantom{X} & \phantom{X} & \phantom{X} \\ 
	%%%	4 & 10 & 2 & 8 & 30 &$C_3^2\rtimes C_6$&$C_3^2$& 0 & 0 &$C_6$& \phantom{X} & \phantom{X} & \phantom{X} \\ 
	%%%	%4 & 10 & 4 & 6 & 31 &$C_3^2\rtimes C_6$&$C_3$& 0 & 0 &$C_3\times S_3$& \phantom{X} & \phantom{X} & \phantom{X} \\ 
	%%%	4 & 10 & 4 & 6 & 32 &$C_3^2\rtimes S_3$&$C_3$& 0 & 0 &$C_3\rtimes S_3$& \phantom{X} & \phantom{X} & \phantom{X} \\ 
	4 & 10 & 4 & 6 & 33 &$C_3\times C_3\rtimes S_3$&$C_3$& 0 & 0 &$C_3\rtimes S_3$& $\checkmark$ & $\checkmark$ & $\checkmark$ \\ 
	4 & 10 & 4 & 6 & 34,35 &$C_3\times C_3\rtimes S_3$&$C_3$& 0 & 0 &$C_3\times S_3$& \phantom{X} & $\checkmark$ & $\checkmark$ \\ 
	%4 & 10 & 4 & 6 & 35 &$C_3\times C_3\rtimes S_3$&$C_3$& 0 & 0 &$C_3\times S_3$& \phantom{X} & $\checkmark$ & $\checkmark$ \\ 
	%%%	4 & 10 & 4 & 6 & 36 &$C_2^2\rtimes D_9$&$C_3$& 0 & 0 &$S_4$& \phantom{X} & \phantom{X} & \phantom{X} \\ 
	%%%	4 & 10 & 4 & 6 & 37 &$C_3\rtimes S_4$&$C_3$& 0 & 0 &$S_4$& \phantom{X} & \phantom{X} & \phantom{X} \\ 
	%%%	4 & 10 & 4 & 6 & 38 &$C_3^2\rtimes D_6$&$C_3$& 0 & 0 &$S_3^2$& \phantom{X} & \phantom{X} & \phantom{X} \\ 
	4 & 10 & 4 & 6 & 39-41 &$C_3^2\rtimes C_2^2$&$C_3$& 0 & 0 &$S_3^2$& \phantom{X} & $\checkmark$ & $\checkmark$ \\ 
	%4 & 10 & 4 & 6 & 40 &$C_3^2\rtimes C_2^2$&$C_3$& 0 & 0 &$S_3^2$& \phantom{X} & $\checkmark$ & $\checkmark$ \\ 
	%4 & 10 & 4 & 6 & 41 &$C_3^2\rtimes C_2^2$&$C_3$& 0 & 0 &$S_3^2$& \phantom{X} & $\checkmark$ & $\checkmark$ \\ 
	4 & 11 & 3 & 8 & 1 &$C_2\times C_{12}$&$C_4$& 4 & 2 &$C_6$& $\checkmark$ & $\checkmark$ & $\checkmark$ \\ 
	4 & 11 & 3 & 8 & 2 &$D_{12}\rtimes C_3$&$C_4$& 4 & 2 &$D_6$& \phantom{X} & $\checkmark$ & $\checkmark$ \\ 
	4 & 12 & 2 & 10 & 1 &$C_{15}$&$C_5$& 3 & 3 &$C_3$& $\checkmark$ & \phantom{X} & $\checkmark$ \\ 
	4 & 12 & 3 & 9 & 2 &$C_{18}$&$C_3$& 5 & 5 &$C_6$& $\checkmark$ & \phantom{X} & $\checkmark$ \\ 
	%%%	4 & 12 & 4 & 8 & 3 &$C_5\times S_3$&$C_3$& 2 & 2 &$C_{10}$& \phantom{X} & \phantom{X} & \phantom{X} \\ 
	4 & 13 & 4 & 9 & 1,2 &$C_{15}$&$C_3$& 3 & 3 &$C_5$& $\checkmark$ & \phantom{X} & $\checkmark$ \\ 
	%4 & 13 & 4 & 9 & 2 &$C_{15}$&$C_3$& 3 & 3 &$C_5$& $\checkmark$ & \phantom{X} & $\checkmark$ \\ 
	4 & 13 & 5 & 8 & 3 &$C_3\times C_6$&$C_3$& 0 & 0 &$C_6$& $\checkmark$ & \phantom{X} & $\checkmark$ \\ 
	%%%	4 & 13 & 3 & 10 & 4 &$C_8\circ D_4$&$C_4$& 8 & 4 &$C_2\times C_4$& \phantom{X} & \phantom{X} & \phantom{X} \\ 
	%%%	4 & 13 & 5 & 8 & 5,6 &$C_3\times A_4$&$C_3$& 0 & 0 &$A_4$& \phantom{X} & \phantom{X} & \phantom{X} \\ 
	%%%	%4 & 13 & 5 & 8 & 6 &$C_3\times A_4$&$C_3$& 0 & 0 &$A_4$& \phantom{X} & \phantom{X} & \phantom{X} \\ 
	%%%	4 & 13 & 5 & 8 & 7 &$C_6\times S_3$&$C_3$& 0 & 0 &$C_2\times C_6$& \phantom{X} & \phantom{X} & \phantom{X} \\ 
	%%%	4 & 13 & 5 & 8 & 8,9 &$S_3\times A_4$&$C_3$& 0 & 0 &$C_2\times A_4$& \phantom{X} & \phantom{X} & \phantom{X} \\ 
	%%%	%4 & 13 & 5 & 8 & 9 &$S_3\times A_4$&$C_3$& 0 & 0 &$C_2\times A_4$& \phantom{X} & \phantom{X} & \phantom{X} \\ 
	4 & 14 & 4 & 10 & 1 &$C_{15}$&$C_3$& 4 & 4 &$C_5$& $\checkmark$ & $\checkmark$ & $\checkmark$ \\ 
	4 & 14 & 4 & 10 & 2 &$C_3\times D_5$&$C_3$& 4 & 4 &$D_5$& $\checkmark$ & $\checkmark$ & $\checkmark$ \\ 
	4 & 16 & 6 & 10 & 1 &$C_3\times C_9$&$C_3$& 0 & 0 &$C_9$& $\checkmark$ & $\checkmark$ & $\checkmark$ \\ 
	%%%	4 & 16 & 4 & 12 & 2 &$C_5\times D_4$&$C_4$& 2 & 2 &$C_{10}$& \phantom{X} & \phantom{X} & \phantom{X} \\ 
	%%%	4 & 16 & 4 & 12 & 3 &$C_5\times D_5$&$C_5$& 0 & 0 &$C_{10}$& \phantom{X} & \phantom{X} & \phantom{X} \\ 
	4 & 16 & 6 & 10 & 4 &$C_3\times D_9$&$C_3$& 0 & 0 &$D_9$& $\checkmark$ & $\checkmark$ & $\checkmark$ \\ 
	%%%	4 & 18 & 6 & 12 & 1 &$C_7\times S_3$&$C_3$& 2 & 2 &$C_{14}$& \phantom{X} & \phantom{X} & \phantom{X} \\ 
	%%%	4 & 19 & 5 & 14 & 1 &$C_3\times D_4\rtimes C_2$&$C_4$& 4 & 2 &$C_2\times C_6$& \phantom{X} & \phantom{X} & \phantom{X} \\ 
	%%%	4 & 22 & 8 & 14 & 1 &$C_9\times S_3$&$C_3$& 0 & 0 &$C_{18}$& \phantom{X} & \phantom{X} & \phantom{X} \\ 
	
	\breaktable
	
	5 & 3 & 0 & 3 & 1,2 &$D_4$&$C_2^2$& 12 & 6 &$C_2$& \phantom{X} & $\checkmark$ & $\checkmark$ \\ 
	%5 & 3 & 0 & 3 & 2 &$D_4$&$C_2^2$& 12 & 6 &$C_2$& \phantom{X} & $\checkmark$ & $\checkmark$ \\ 
	5 & 3 & 1 & 2 & 3-5 &$C_2^3$&$C_2^2$& 4 & 2 &$C_2$& \phantom{X} & $\checkmark$ & $\checkmark$ \\ 
	%5 & 3 & 1 & 2 & 4 &$C_2^3$&$C_2^2$& 4 & 2 &$C_2$& \phantom{X} & $\checkmark$ & $\checkmark$ \\ 
	%5 & 3 & 1 & 2 & 5 &$C_2^3$&$C_2^2$& 4 & 2 &$C_2$& \phantom{X} & $\checkmark$ & $\checkmark$ \\ 
	5 & 4 & 1 & 3 & 1 &$C_6$&$C_3$& 3 & 3 &$C_2$& $\checkmark$ & \phantom{X} & $\checkmark$ \\ 
	5 & 4 & 0 & 4 & 2 &$C_6$&$C_3$& 6 & 6 &$C_2$& $\checkmark$ & $\checkmark$ & $\checkmark$ \\ 
	%%%	5 & 4 & 1 & 3 & 3 &$D_4$&$C_4$& 2 & 2 &$C_2$& \phantom{X} & \phantom{X} & \phantom{X} \\ 
	%%%	5 & 4 & 1 & 3 & 4 &$D_4$&$C_2^2$& 6 & 3 &$C_2$& \phantom{X} & \phantom{X} & \phantom{X} \\ 
	%%%	5 & 4 & 1 & 3 & 5 &$D_6$&$C_6$& 6 & 2 &$C_2$& \phantom{X} & \phantom{X} & \phantom{X} \\ 
	%%%	5 & 4 & 1 & 3 & 6 &$D_6$&$S_3$& 6 & 2 &$C_2$& \phantom{X} & \phantom{X} & \phantom{X} \\ 
	5 & 5 & 1 & 4 & 1 &$C_6$&$C_3$& 4 & 4 &$C_2$& $\checkmark$ & \phantom{X} & $\checkmark$ \\ 
	5 & 5 & 1 & 4 & 2 &$C_2\times C_4$&$C_4$& 8 & 4 &$C_2$& \phantom{X} & \phantom{X} & $\checkmark$ \\ 
	5 & 5 & 1 & 4 & 3,4 &$C_2\times C_4$&$C_4$& 4 & 3 &$C_2$& \phantom{X} & \phantom{X} & $\checkmark$ \\ 
	%%%	%5 & 5 & 1 & 4 & 4 &$C_2\times C_4$&$C_4$& 4 & 3 &$C_2$& \phantom{X} & \phantom{X} & \phantom{X} \\ 
	5 & 5 & 1 & 4 & 5 &$C_2\times C_4$&$C_4$& 8 & 4 &$C_2$& $\checkmark$ & $\checkmark$ & $\checkmark$ \\ 
	%%%	5 & 5 & 1 & 4 & 6 &$D_5$&$C_5$& 2 & 2 &$C_2$& \phantom{X} & \phantom{X} & \phantom{X} \\ 
	%%%	5 & 5 & 1 & 4 & 7 &$D_6$&$C_3$& 4 & 4 &$C_2^2$& \phantom{X} & \phantom{X} & \phantom{X} \\ 
	%%%	5 & 5 & 1 & 4 & 8 &$D_6$&$C_6$& 4 & 2 &$C_2$& \phantom{X} & \phantom{X} & \phantom{X} \\ 
	%%%	5 & 5 & 1 & 4 & 9 &$D_8$&$C_4$& 8 & 4 &$C_2^2$& \phantom{X} & \phantom{X} & \phantom{X} \\ 
	%%%	5 & 5 & 1 & 4 & 10 &$D_8$&$C_8$& 8 & 2 &$C_2$& \phantom{X} & \phantom{X} & \phantom{X} \\ 
	%%%	5 & 5 & 1 & 4 & 11,12 &$C_2\times D_4$&$C_4$& 8 & 4 &$C_2^2$& \phantom{X} & \phantom{X} & \phantom{X} \\ 
	%%%	%5 & 5 & 1 & 4 & 12 &$C_2\times D_4$&$C_4$& 8 & 4 &$C_2^2$& \phantom{X} & \phantom{X} & \phantom{X} \\ 
	%%%	5 & 5 & 1 & 4 & 13 &$C_2\times D_4$&$C_2^2$& 8 & 4 &$C_2^2$& \phantom{X} & \phantom{X} & \phantom{X} \\ 
	%%%	5 & 5 & 1 & 4 & 14 &$C_2\times D_4$&$C_2\times C_4$& 8 & 2 &$C_2$& \phantom{X} & \phantom{X} & \phantom{X} \\ 
	%%%	5 & 5 & 2 & 3 & 15,16 &$C_2\times D_4$&$C_4$& 0 & 0 &$C_2^2$& \phantom{X} & \phantom{X} & \phantom{X} \\ 
	%%%	%5 & 5 & 2 & 3 & 16 &$C_2\times D_4$&$C_4$& 0 & 0 &$C_2^2$& \phantom{X} & \phantom{X} & \phantom{X} \\ 
	5 & 5 & 2 & 3 & 17 &$C_2\times D_4$&$C_2^2$& 0 & 0 &$C_2^2$& \phantom{X} & $\checkmark$ & $\checkmark$ \\ 
	%%%	5 & 6 & 2 & 4 & 1 &$D_6$&$C_3$& 2 & 2 &$C_2^2$& \phantom{X} & \phantom{X} & \phantom{X} \\ 
	%%%	5 & 6 & 2 & 4 & 2 &$D_{10}$&$C_5$& 0 & 0 &$C_2^2$& \phantom{X} & \phantom{X} & \phantom{X} \\ 
	5 & 7 & 1 & 6 & 1 &$C_6$&$C_3$& 6 & 6 &$C_2$& $\checkmark$ & $\checkmark$ & $\checkmark$ \\ 
	5 & 7 & 3 & 4 & 2,3 &$C_3^2$&$C_3$& 0 & 0 &$C_3$& $\checkmark$ & $\checkmark$ & $\checkmark$ \\ 
	%5 & 7 & 3 & 4 & 3 &$C_3^2$&$C_3$& 0 & 0 &$C_3$& $\checkmark$ & $\checkmark$ & $\checkmark$ \\ 
	5 & 7 & 2 & 5 & 4 &$C_3^2$&$C_3$& 3 & 3 &$C_3$& $\checkmark$ & $\checkmark$ & $\checkmark$ \\ 
	5 & 7 & 2 & 5 & 5-7 &$C_3^2$&$C_3$& 3 & 3 &$C_3$& \phantom{X} & \phantom{X} & $\checkmark$ \\ 
	%%%	%5 & 7 & 2 & 5 & 6 &$C_3^2$&$C_3$& 3 & 3 &$C_3$& \phantom{X} & \phantom{X} & \phantom{X} \\ 
	%%%	%5 & 7 & 2 & 5 & 7 &$C_3^2$&$C_3$& 3 & 3 &$C_3$& \phantom{X} & \phantom{X} & \phantom{X} \\ 
	5 & 7 & 3 & 4 & 8 &$C_3^2$&$C_3$& 0 & 0 &$C_3$& \phantom{X} & \phantom{X} & $\checkmark$ \\ 
	%%%	5 & 7 & 1 & 6 & 9-11 &$D_4\rtimes C_2$&$C_4$& 12 & 6 &$C_2^2$& \phantom{X} & \phantom{X} & \phantom{X} \\ 
	%%%	%5 & 7 & 1 & 6 & 10 &$D_4\rtimes C_2$&$C_4$& 12 & 6 &$C_2^2$& \phantom{X} & \phantom{X} & \phantom{X} \\ 
	%%%	%5 & 7 & 1 & 6 & 11 &$D_4\rtimes C_2$&$C_4$& 12 & 6 &$C_2^2$& \phantom{X} & \phantom{X} & \phantom{X} \\ 
	%%%	5 & 7 & 1 & 6 & 12 &$D_4\rtimes C_2$&$Q_8$& 12 & 3 &$C_2$& \phantom{X} & \phantom{X} & \phantom{X} \\ 
	%%%	5 & 7 & 2 & 5 & 13-15 &$D_4\rtimes C_2$&$C_4$& 4 & 2 &$C_2^2$& \phantom{X} & \phantom{X} & \phantom{X} \\ 
	%%%	%5 & 7 & 2 & 5 & 14 &$D_4\rtimes C_2$&$C_4$& 4 & 2 &$C_2^2$& \phantom{X} & \phantom{X} & \phantom{X} \\ 
	%%%	%5 & 7 & 2 & 5 & 15 &$D_4\rtimes C_2$&$C_4$& 4 & 2 &$C_2^2$& \phantom{X} & \phantom{X} & \phantom{X} \\ 
	%%%	5 & 7 & 3 & 4 & 16-19 &$C_3\rtimes S_3$&$C_3$& 0 & 0 &$S_3$& \phantom{X} & \phantom{X} & \phantom{X} \\ 
	%%%	%5 & 7 & 3 & 4 & 17 &$C_3\rtimes S_3$&$C_3$& 0 & 0 &$S_3$& \phantom{X} & \phantom{X} & \phantom{X} \\ 
	%%%	%5 & 7 & 3 & 4 & 18 &$C_3\rtimes S_3$&$C_3$& 0 & 0 &$S_3$& \phantom{X} & \phantom{X} & \phantom{X} \\ 
	%%%	5 & 7 & 3 & 4 & 19 &$D_{12}$&$C_3$& 0 & 0 &$D_4$& \phantom{X} & \phantom{X} & \phantom{X} \\ 
	%%%	5 & 7 & 3 & 4 & 20 &$C_2^2\times S_3$&$C_3$& 0 & 0 &$C_2^3$& \phantom{X} & \phantom{X} & \phantom{X} \\ 
	5 & 9 & 1 & 8 & 1 &$C_8$&$C_4$& 8 & 6 &$C_2$& $\checkmark$ & \phantom{X} & $\checkmark$ \\ 
	%%%	5 & 9 & 3 & 6 & 2 &$C_8\rtimes C_2^2$&$C_4$& 0 & 0 &$C_2^3$& \phantom{X} & \phantom{X} & \phantom{X} \\ 
	5 & 9 & 3 & 6 & 3-12 &$C_4.C_2^3$&$C_4$& 0 & 0 &$C_2^3$& \phantom{X} & $\checkmark$ & $\checkmark$ \\ 
	%5 & 9 & 3 & 6 & 4 &$C_4.C_2^3$&$C_4$& 0 & 0 &$C_2^3$& \phantom{X} & $\checkmark$ & $\checkmark$ \\ 
	%5 & 9 & 3 & 6 & 5 &$C_4.C_2^3$&$C_4$& 0 & 0 &$C_2^3$& \phantom{X} & $\checkmark$ & $\checkmark$ \\ 
	%5 & 9 & 3 & 6 & 6 &$C_4.C_2^3$&$C_4$& 0 & 0 &$C_2^3$& \phantom{X} & $\checkmark$ & $\checkmark$ \\ 
	%5 & 9 & 3 & 6 & 7 &$C_4.C_2^3$&$C_4$& 0 & 0 &$C_2^3$& \phantom{X} & $\checkmark$ & $\checkmark$ \\ 
	%5 & 9 & 3 & 6 & 8 &$C_4.C_2^3$&$C_4$& 0 & 0 &$C_2^3$& \phantom{X} & $\checkmark$ & $\checkmark$ \\ 
	%5 & 9 & 3 & 6 & 9 &$C_4.C_2^3$&$C_4$& 0 & 0 &$C_2^3$& \phantom{X} & $\checkmark$ & $\checkmark$ \\ 
	%5 & 9 & 3 & 6 & 10 &$C_4.C_2^3$&$C_4$& 0 & 0 &$C_2^3$& \phantom{X} & $\checkmark$ & $\checkmark$ \\ 
	%5 & 9 & 3 & 6 & 11 &$C_4.C_2^3$&$C_4$& 0 & 0 &$C_2^3$& \phantom{X} & $\checkmark$ & $\checkmark$ \\ 
	%5 & 9 & 3 & 6 & 12 &$C_4.C_2^3$&$C_4$& 0 & 0 &$C_2^3$& \phantom{X} & $\checkmark$ & $\checkmark$ \\ 
	5 & 10 & 2 & 8 & 1 &$C_2\times C_6$&$C_3$& 6 & 6 &$C_2^2$& $\checkmark$ & $\checkmark$ & $\checkmark$ \\ 
	%%%	5 & 10 & 4 & 6 & 2-4 &$C_3\times S_3$&$C_3$& 0 & 0 &$C_6$& \phantom{X} & \phantom{X} & \phantom{X} \\ 
	%%%	%5 & 10 & 4 & 6 & 3 &$C_3\times S_3$&$C_3$& 0 & 0 &$C_6$& \phantom{X} & \phantom{X} & \phantom{X} \\ 
	%%%	%5 & 10 & 4 & 6 & 4 &$C_3\times S_3$&$C_3$& 0 & 0 &$C_6$& \phantom{X} & \phantom{X} & \phantom{X} \\ 
	5 & 10 & 4 & 6 & 5 &$C_3\times S_3$&$C_3$& 0 & 0 &$S_3$& $\checkmark$ & $\checkmark$ & $\checkmark$ \\ 
	%%%	5 & 10 & 4 & 6 & 6 &$C_3\times S_3$&$C_3$& 0 & 0 &$C_6$& \phantom{X} & \phantom{X} & \phantom{X} \\ 
	%%%	5 & 10 & 4 & 6 & 7 &$C_3\times S_3$&$C_3$& 0 & 0 &$S_3$& \phantom{X} & \phantom{X} & \phantom{X} \\ 
	%%%	5 & 10 & 4 & 6 & 8 &$C_3\rtimes D_4$&$C_3$& 0 & 0 &$D_4$& \phantom{X} & \phantom{X} & \phantom{X} \\ 
	%%%	5 & 10 & 4 & 6 & 9-10 &$S_3^2$&$C_3$& 0 & 0 &$D_6$& \phantom{X} & \phantom{X} & \phantom{X} \\ 
	%%%	%5 & 10 & 4 & 6 & 10 &$S_3^2$&$C_3$& 0 & 0 &$D_6$& \phantom{X} & \phantom{X} & \phantom{X} \\ 
	5 & 12 & 3 & 9 & 1 &$C_9$&$C_3$& 5 & 5 &$C_3$& $\checkmark$ & \phantom{X} & $\checkmark$ \\ 
	5 & 13 & 4 & 9 & 1 &$C_{12}$&$C_3$& 3 & 3 &$C_4$& $\checkmark$ & \phantom{X} & $\checkmark$ \\ 
	%%%	6 & 5 & 1 & 4 & 1 &$S_3$&$C_3$& 4 & 4 &$C_2$& \phantom{X} & \phantom{X} & \phantom{X} \\ 
	%%%	6 & 5 & 1 & 4 & 2 &$D_4$&$C_4$& 8 & 4 &$C_2$& \phantom{X} & \phantom{X} & \phantom{X} \\ 
	6 & 5 & 2 & 3 & 3 &$C_2^3$&$C_2^2$& 0 & 0 &$C_2$& \phantom{X} & $\checkmark$ & $\checkmark$ \\ 
	6 & 5 & 2 & 3 & 4 &$C_2^3$&$C_2^2$& 0 & 0 &$C_2$& \phantom{X} & \phantom{X} & $\checkmark$ \\ 
	6 & 7 & 2 & 5 & 1 &$C_6$&$C_3$& 3 & 3 &$C_2$& $\checkmark$ & \phantom{X} & $\checkmark$ \\ 
	6 & 7 & 2 & 5 & 2,3 &$C_2\times C_4$&$C_4$& 4 & 2 &$C_2$& \phantom{X} & \phantom{X} & $\checkmark$ \\ 
	%%%	%6 & 7 & 2 & 5 & 3 &$C_2\times C_4$&$C_4$& 4 & 2 &$C_2$& \phantom{X} & \phantom{X} & \phantom{X} \\ 
	%%%	6 & 7 & 3 & 4 & 4,5 &$D_6$&$C_3$& 0 & 0 &$C_2^2$& \phantom{X} & \phantom{X} & \phantom{X} \\ 
	%%%	%6 & 7 & 3 & 4 & 5 &$D_6$&$C_3$& 0 & 0 &$C_2^2$& \phantom{X} & \phantom{X} & \phantom{X} \\ 
	%%%	6 & 7 & 2 & 5 & 6 &$D_6$&$C_6$& 0 & 0 &$C_2$& \phantom{X} & \phantom{X} & \phantom{X} \\ 
	%%%	6 & 8 & 2 & 6 & 1 &$D_7$&$C_7$& 0 & 0 &$C_2$& \phantom{X} & \phantom{X} & \phantom{X} \\ 
	%%%	6 & 9 & 3 & 6 & 1 &$D_8$&$C_4$& 0 & 0 &$C_2^2$& \phantom{X} & \phantom{X} & \phantom{X} \\ 
	%%%	6 & 9 & 3 & 6 & 2-4 &$D_4\rtimes C_2$&$C_4$& 0 & 0 &$C_2^2$& \phantom{X} & \phantom{X} & \phantom{X} \\ 
	%%%	%6 & 9 & 3 & 6 & 3 &$D_4\rtimes C_2$&$C_4$& 0 & 0 &$C_2^2$& \phantom{X} & \phantom{X} & \phantom{X} \\ 
	%%%	%6 & 9 & 3 & 6 & 4 &$D_4\rtimes C_2$&$C_4$& 0 & 0 &$C_2^2$& \phantom{X} & \phantom{X} & \phantom{X} \\ 
	6 & 10 & 2 & 8 & 1 &$C_6$&$C_3$& 6 & 6 &$C_2$& $\checkmark$ & $\checkmark$ & $\checkmark$ \\ 
	6 & 10 & 4 & 6 & 2 &$C_3^2$&$C_3$& 0 & 0 &$C_3$& $\checkmark$ & $\checkmark$ & $\checkmark$ \\ 
	6 & 10 & 4 & 6 & 3-5 &$C_3^2$&$C_3$& 0 & 0 &$C_3$& \phantom{X} & \phantom{X} & $\checkmark$ \\ 

	\tailtable

	%%%%%%%%%%%%%%%%%% END TABLE %%%%%%%%%%%%%%%%%%%%


\begin{thebibliography}{00}
		\bibitem{BL}
		C. Birkenhake, H. Lange, \emph{Complex abelian varieties.} Volume 302 of Grundlehren der Mathematischen Wissenschaften [Fundamental Principles of Mathematical Sciences]. Springer-Verlag, Berlin, second edition, 2004.
		\bibitem{CF}
		E. Colombo, P. Frediani, \emph{A bound on the dimension of a totally geodesic submanifold in the Prym locus.} Collectanea Mathematica, 70, n.1, (2019), 51-57. https://doi.org/10.1007/s13348-018-0215-0.  arXiv:1711.03421. 
		\bibitem{CF2}
		E. Colombo, P. Frediani, \emph{On the dimension of totally geodesic submanifolds in the Prym loci.} arXiv:2101.05189. To appear in Boll Unione Mat Ital (2021). https://doi.org/10.1007/s40574-021-00287-4. 
		\bibitem{CFGP}
		E. Colombo, P. Frediani, A. Ghigi, M. Penegini, \emph{Shimura curves in the Prym locus.}  Communications in Contemporary Mathematics, Vol. 21, N0. 2 (2019) 1850009 (34 pages). https://doi.org/10.1142/S0219199718500098.
		\bibitem{FG}
		G.P. Grosselli, P. Frediani, \emph{Shimura curves in the Prym loci of ramified double covers.}  arXiv:2007.09646. 
		\bibitem{FGP}
		P. Frediani, A. Ghigi, M. Penegini, \emph{Shimura varieties in the Torelli locus via Galois coverings.} Int. Math. Res. Not. IMRN, (20):10595-10623, 2015.
		\bibitem{FGM}
		P. Frediani, G.P. Grosselli, A. Mohajer, \emph{Higher dimensional Shimura varieties in the Prym loci of ramified double covers.}  arXiv:2007.09646. 
		\bibitem{FGS} 
		P.~Frediani, A.~Ghigi, I.~Spelta, \emph{Infinitely many Shimura varieties in the Jacobian locus for $g \leq 4$.}  arXiv:1910.13245. To appear in Ann. Scuola Norm. Sup. Pisa Cl. Sci. 
		\bibitem{M20}
		A. Mohajer, \emph{On the Prym map of Galois coverings.}  arXiv:2004.09678. To appear in Rocky Mountain Journal of Mathematics
		\bibitem{MZ}
		A. Mohajer, K. Zuo, \emph{On Shimura subvarieties generated by families of abelian covers of $\mathbb{P}^{1}$.} Journal of Pure and Applied Algebra, 222 (4), 2018, 931-949.
		\bibitem{M10}
		B. Moonen, \emph{Special subavarieties arising from families of cyclic covers of the projective line}. Documenta Mathematica 15 (2010) 793-819.
		\bibitem{MO}
		B. Moonen, F. Oort, \emph{The Torelli locus and special subvarieties}. In Handbook of Moduli: Volume II, pages 549-94. International Press, Boston, MA, 2013.
		\bibitem{P1}
		R. Pardini, \emph{Abelian covers of algebraic varieties.} J.reine angew. Math., 417 (1991), 191-213.
		\bibitem{RR}
		S. Recillas, R. Rodr\'iguez, \emph{Prym varieties and fourfold covers.} Publ.  Preliminares  Inst.Mat. Univ. Nac. Aut. Mexico, 686(2001), arXiv:math/0303155, 2003.
		\bibitem{Sz}
		T. Szamuely, \emph{Galois Groups and Fundamental Groups.} Cambridge Studies in Advanced Mathematics 117, Cambridge University Press, 2009.
		\bibitem{Vo}
		H. V\"olklein, \emph{Groups as Galois Groups: An introduction.} Cambridge studies in Advanced Mathematics 53, Cambridge University Press (1996) Cambridge.
		\bibitem{W}
		A. Wright, \emph{Shwarz triangle mappings and Teichm\"uller curves: abelian square-tiled surfaces},  J. Mod. Dyn. 6 (2012) 405- 426.
	\end{thebibliography}
\end{document}